\documentclass[microtype]{gtpart}
\usepackage{graphicx}
\usepackage[mathscr]{eucal}
\usepackage{amssymb}
\usepackage{xcolor}
\definecolor{darkred}{RGB}{200, 60, 0}
\definecolor{mildblue}{RGB}{0, 100, 250}
\usepackage{enumerate, cite}
\usepackage[margin=1.25in]{geometry}
\usepackage{hyperref}
\hypersetup{
    colorlinks=true,
    linkcolor=darkred,
    urlcolor=darkred,
    citecolor=mildblue,      
    urlcolor=darkred,
}
\usepackage[nameinlink]{cleveref}
\usepackage[nottoc]{tocbibind}


\newcommand{\br}{\mathbb{R}}

\newcommand{\bz}{\mathbb Z}
\newcommand{\bn}{\mathbb N}

\newcommand{\vp}{\varphi}

\newcommand{\ssm}{\smallsetminus}

\DeclareMathOperator{\mcg}{MCG}

\DeclareMathOperator{\Homeo}{Homeo}

\DeclareMathOperator{\supp}{supp}

\DeclareMathOperator{\rank}{rank}

\renewcommand{\co}{\colon\thinspace}



\newtheorem{theorem}{Theorem}[section]
\newtheorem{proposition}[theorem]{Proposition}
\newtheorem{lemma}[theorem]{Lemma}
\newtheorem{corollary}[theorem]{Corollary}

\newtheorem*{maintheorem1}{\Cref{thm:closed manifold}}
\newtheorem*{maintheorem2}{\Cref{thm:telescoping manifolds}}
\newtheorem*{maintheorem3}{\Cref{thm:well-ordered}}
\newtheorem*{maincorollary3}{\Cref{cor:proper class}}

\theoremstyle{definition}
\newtheorem{Def}[theorem]{Definition}

\newtheorem*{example}{Example}
\newtheorem{remark}[theorem]{Remark}

\numberwithin{equation}{section}


\title{Strongly bounded generation in transformation groups}

\author{Nicholas G. Vlamis}
\address{Department of Mathematics \\ CUNY Graduate Center \\ New York, NY 10016, and \newline Department of Mathematics \\ CUNY Queens College \\ Flushing, NY 11367}
\email{nvlamis@gc.cuny.edu}

\begin{document}  

\begin{abstract}
	Word metrics on finitely generated groups have canonical quasi-isometry classes, making quasi-isometry invariants genuine group invariants. 
	Rosendal generalized this phenomenon to topological groups through CB-generation, but in the general topological setting the resulting quasi-isometry invariants are not invariants of the underlying abstract group. 
	Specializing to the discrete case yields what we call SB-generated groups, where the invariants are genuinely algebraic. 
	We show that SB-generation arises naturally in transformation groups  by identifying several broad families of examples: the identity component of homeomorphism groups of closed manifolds, certain big mapping class groups, and homeomorphism groups of compact well-ordered spaces with successor limit capacity. 
	These results demonstrate that SB-generation provides a robust extension of finite generation.
\end{abstract}

\maketitle

\vspace{-0.2in}

\makeatletter
\let\origtableofcontents\tableofcontents
\renewcommand{\tableofcontents}{%
  \begingroup
    \setlength{\parskip}{0pt}
    \setlength{\parindent}{0pt}%
    \origtableofcontents 
  \endgroup
}
\makeatother
\setcounter{tocdepth}{1}
\tableofcontents


\section{Introduction}

A central tenet of geometric group theory is to regard a group as a geometric object.
To do so, one endows the group with a metric relevant to its group-theoretic structure; for instance, a standard requirement is for the metric to be left (or right) invariant. 
Having chosen such a metric, the next question is whether it is canonical in some sense---so that its large-scale geometry encodes invariants of the group itself.

For finitely generated groups, the canonical choice is the word metric associated to a finite generating set, since any two such word metrics are quasi-isometric (indeed, bi-Lipschitz equivalent).
In the setting of topological groups, compactly generated groups provide the natural analogue: word metrics associated to any two compact generating sets are quasi-isometric.
However, quasi-isometry invariants of these word metrics are not, in general, isomorphism invariants of the underlying abstract group.
For instance, consider the following groups: the circle group \( \mathbb T \), the complex numbers under multiplication \( \mathbb C^\times \), \( \br \), and \( \br^2 \), where the latter two are equipped with standard addition. 
In each of these groups, word metrics arising from compact generating sets are quasi-isometric to the group’s standard geometry.
Yet, as abstract groups, \( \mathbb T \) and \( \mathbb C^\times \) (respectively, \( \br \) and \( \br^2 \)) are isomorphic but not quasi-isometric.
This illustrates  that quasi-isometry invariants of such word metrics are not invariants of the underlying abstract groups.

Our motivation is to exhibit examples of groups that admit generating sets for which the associated word metrics all lie in a single quasi-isometry class canonically determined by the group, so that quasi-isometry invariants become genuine isomorphism invariants (this is made precise in \Cref{rem:motivation}). 
Rosendal \cite{RosendalCoarse} showed that the groups with this property are precisely the discrete CB-generated groups.
Recall that a subset of a topological group is \emph{coarsely bounded} if it has finite diameter in every left-invariant continuous metric on the group, and a topological group is \emph{CB-generated} if it is generated by a coarsely bounded subset.

In this paper, we establish the existence of rich, natural families of discrete CB-generated groups, demonstrating the value of restricting Rosendal's work to the setting of abstract groups. 
To emphasize the absence of topology, we introduce the following terminology.

\begin{Def}[SB-generated group]
	A subset of a group \( G \) is \emph{strongly bounded} if it has finite diameter in every left-invariant metric on \( G \).
	A group is \emph{SB-generated} if it is generated by a strongly bounded subset. 
\end{Def}

Every finitely generated group is SB-generated, but the class is strictly larger.
A group is \emph{strongly bounded} if it is a strongly bounded subset of itself.
Note that every strongly bounded group is SB-generated. 
Many strongly bounded groups exist in the literature: e.g., the homeomorphism group of the \( n \)-sphere \cite{CalegariDistortion}, the homeomorphism group of \( \br^n \) \cite{MannLargescale}, the symmetric group on the natural numbers \cite{BergmanGenerating}, homeomorphism groups of telescoping surfaces \cite{VlamisHomeomorphisma,VlamisHomeomorphism}, and homeomorphism groups of well-ordered spaces of Cantor--Bendixson degree one and with successor limit capacity \cite{BhatAlgebraic}. 

Given the existence of strongly bounded groups, simple constructions establish the existence of non-strongly bounded SB-generated groups---for example, the free product of two strongly bounded groups.
Here, our main results provide natural, structurally rich families of SB-generated groups that are not finitely generated SB-generated groups and that include non-strongly bounded groups.

\subsection*{Homeomorphism groups of closed manifolds}

Let \( M \) be a  manifold, and let \( \Homeo(M) \) denote the group of homeomorphisms \( M \to M \).
Denote by \( \Homeo_0(M) \)  the connected component of the identity, when equipped with the compact-open topology.

\begin{maintheorem1} 
	If \( M \) is a closed manifold, then \( \Homeo_0(M) \) is SB-generated. 
\end{maintheorem1}

This strengthens a theorem of Mann--Rosendal \cite{MannLargescale}, which states that \( \Homeo_0(M) \) is CB-generated in the compact-open topology. 
To prove \Cref{thm:closed manifold}, we show that the coarsely bounded generating set given by Mann--Rosendal is in fact strongly bounded.  
Mann--Rosendal also showed  that if the dimension of \( M \) is at least two and \( \pi_1(M) \) has an infinite order element, then \( \Homeo_0(M) \) is not strongly bounded. 

Recall that the \emph{mapping class group} of a manifold \( M \) (also known as the \emph{homeotopy group}), denoted \( \mcg(M) \), is the group of isotopy classes of homeomorphisms \( M \to M \). 

\begin{corollary} 
\label{cor:mcg}
	Let \( M \) be a closed manifold.
	If \( \mcg(M) \) is finitely generated, then \( \Homeo(M) \) is SB-generated.
	\qed
\end{corollary}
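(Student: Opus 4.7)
The plan is to exploit the short exact sequence
$$1 \to \Homeo_0(M) \to \Homeo(M) \to \mcg(M) \to 1,$$
valid since for a closed manifold the path components of \( \Homeo(M) \) in the compact-open topology coincide with isotopy classes and \( \Homeo_0(M) \) is a normal subgroup. The idea is to produce a strongly bounded generating set for \( \Homeo(M) \) by combining a strongly bounded generating set for \( \Homeo_0(M) \) with a finite lift of a generating set for \( \mcg(M) \).

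Concretely, I would first invoke \Cref{thm:closed manifold} to obtain a strongly bounded subset \( S \subseteq \Homeo_0(M) \) that generates \( \Homeo_0(M) \). Then, using the hypothesis that \( \mcg(M) \) is finitely generated, pick representatives \( f_1, \ldots, f_k \in \Homeo(M) \) whose images generate \( \mcg(M) \), and set \( T = S \cup \{f_1, \ldots, f_k\} \). A standard extension argument shows \( T \) generates \( \Homeo(M) \): any \( g \in \Homeo(M) \) is, modulo \( \Homeo_0(M) \), a word in the \( f_i^{\pm 1} \), after which the remaining element of \( \Homeo_0(M) \) is expressible as a word in \( S \).

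It remains to verify that \( T \) is strongly bounded in \( \Homeo(M) \). The finite set \( \{f_1, \ldots, f_k\} \) is automatically strongly bounded, so the key point is that \( S \) remains strongly bounded when viewed inside the larger group. For this, observe that if \( d \) is any left-invariant metric on \( \Homeo(M) \), then its restriction \( d|_{\Homeo_0(M)} \) is a left-invariant metric on \( \Homeo_0(M) \), and the diameter of \( S \) with respect to \( d \) equals its diameter with respect to this restriction, which is finite by assumption. Since the union of two strongly bounded sets is strongly bounded, \( T \) works.

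There is no real obstacle here beyond bookkeeping; the substantive work is already done in \Cref{thm:closed manifold}, and the only fact one needs beyond that is the soft observation that strongly bounded subsets of a subgroup remain strongly bounded in the ambient group, combined with the standard extension principle for generating sets.
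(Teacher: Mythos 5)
Your proof is correct and is exactly the argument the paper intends: the corollary is stated with an immediate \(\qed\) following \Cref{thm:closed manifold}, relying on precisely this extension-of-generating-sets argument along the sequence \(1 \to \Homeo_0(M) \to \Homeo(M) \to \mcg(M) \to 1\), together with the observation that a strongly bounded subset of a subgroup stays strongly bounded in the ambient group because left-invariant metrics restrict. The only bookkeeping you might make explicit is closing \(T\) under inverses, which costs nothing since the inverse of a strongly bounded set is strongly bounded by left-invariance.
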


In dimension two this applies to every closed surface, since surface mapping class groups are finitely generated, a theorem of Dehn (see \cite[Chapter~4]{FarbPrimer}).

\subsection*{Big mapping class groups}

Our next family arises from big mapping class groups.
A  surface is of \emph{finite type} if its interior is homeomorphic to the interior of a compact surface; otherwise, it is of \emph{infinite type}.
A mapping class group of a surface is called \emph{big} if the  surface is of infinite type.
For a surface \( S \), the \emph{compact-open topology} on \( \mcg(S) \) is the quotient topology inherited from the compact-open topology on \( \Homeo(S) \).

Mann--Rafi \cite[Theorems~1.6~\&~1.7]{MannLargescalea} classified the tame surfaces whose mapping class groups are CB-generated when equipped with the compact-open topology. 
In \Cref{cor:telescoping manifolds}, we give a subclass of these surfaces whose mapping class groups are SB-generated.
The description of this subclass relies on the notion of a telescoping surface,  introduced by Mann--Rafi in \cite{MannLargescalea} and expanded by the author in \cite{VlamisHomeomorphism}. 
Telescoping surfaces naturally partition into three types based on the cardinality of their set of maximal ends (see \Cref{sec:mcg}\label{fn:tame} for definitions).
Informally, each type reflects the homogeneity of one of the following surfaces:  the 2-sphere (perfect set of maximal ends), the plane (a unique maximal end), or the open annulus (two maximal ends). 
A telescoping surface with a unique maximal end is called \emph{uniquely telescoping}.

Given a subsurface \( \Sigma \) of a surface \( S \), we let \( U_\Sigma \subset \mcg(S) \) denote the mapping classes admitting a representative homeomorphism that restricts to the identity on \( \Sigma \). 
When \( \Sigma \) is finite type, the set \( U_\Sigma \) is a clopen neighborhood of the identity in the compact-open topology on \( \mcg(S) \).
The main theorem (\Cref{thm:telescoping manifolds}) provides a condition guaranteeing the existence of a finite-type subsurface \( \Sigma \) of \( S \) such that \( U_\Sigma \) is strongly bounded, or in other words, for when \( \mcg(S) \) is \emph{locally strongly bounded}. 


\begin{maintheorem2}
	Let \( S \) be a surface that can be expressed as the connected sum of a finite-type borderless surface and finitely many  telescoping surfaces \( M_1, \ldots, M_n \).
	If, for each \( M_i \) that is uniquely telescoping, there exists \( j \neq i \) with \( M_j \) homeomorphic to \( M_i \), then there exists a finite-type subsurface \( \Sigma \) of \( S \) such that \( U_\Sigma \) is strongly bounded\footnote{In the actual \Cref{thm:telescoping manifolds} below, \( U_\Sigma \) is shown to be strongly distorted, a stronger condition. See \Cref{section:basics}.} in \( \mcg(S) \).
\end{maintheorem2}


In \Cref{cor:cb-to-sb}, we show that every strongly locally bounded CB-generated Polish group is SB-generated.
Mapping class groups are Polish groups, and therefore, the following corollary is an immediate consequence of \Cref{thm:telescoping manifolds} and \Cref{cor:cb-to-sb}.


\begin{corollary} 
	\label{cor:telescoping manifolds}
	Let \( S \), \( M_1, \ldots, M_n \) be as in \Cref{thm:telescoping manifolds} and suppose that \( \mcg(S) \) is CB-generated with respect to the compact-open topology.
 	If, for each \( M_i \) that is uniquely telescoping, there exists \( j \neq i \) with \( M_j \) homeomorphic to \( M_i \), then \( \mcg(S) \) is SB-generated.
	\qed
\end{corollary}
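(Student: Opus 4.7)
The plan is to proceed exactly as the author indicates: combine \Cref{thm:telescoping manifolds} with \Cref{cor:cb-to-sb} (the latter being the assertion that every strongly locally bounded CB-generated Polish group is SB-generated). So the proof is essentially a matter of verifying the hypotheses of \Cref{cor:cb-to-sb}.

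First, I would record that \( \mcg(S) \) is a Polish group: with the compact-open topology \( \Homeo(S) \) is Polish for any second-countable surface \( S \), and \( \mcg(S) \) is the quotient by the closed normal subgroup \( \Homeo_0(S) \) of isotopically trivial homeomorphisms, so the compact-open topology makes \( \mcg(S) \) Polish as well. By hypothesis, \( \mcg(S) \) is CB-generated with respect to this topology, so two of the three hypotheses of \Cref{cor:cb-to-sb} are already in hand.

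Next, I would apply \Cref{thm:telescoping manifolds} to the given data \( S, M_1, \ldots, M_n \): since the hypothesis that every uniquely telescoping \( M_i \) is matched by a homeomorphic \( M_j \) with \( j \neq i \) is shared by the two statements, \Cref{thm:telescoping manifolds} yields a finite-type subsurface \( \Sigma \subset S \) such that \( U_\Sigma \) is strongly bounded in \( \mcg(S) \). Because \( \Sigma \) is finite-type, the set \( U_\Sigma \) is a (clopen) neighborhood of the identity in the compact-open topology, as noted in the paragraph preceding \Cref{thm:telescoping manifolds}. Hence \( \mcg(S) \) contains a strongly bounded identity neighborhood, which is precisely the condition that it be strongly locally bounded.

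Having verified that \( \mcg(S) \) is a CB-generated Polish group that is strongly locally bounded, I would invoke \Cref{cor:cb-to-sb} to conclude that \( \mcg(S) \) is SB-generated. There is no genuine obstacle here—the content of the corollary lies entirely in the two results being combined—so the only thing to be careful about is the standing use of the compact-open topology on \( \mcg(S) \), both in order to speak of CB-generation and in order to identify \( U_\Sigma \) as an open set so that \Cref{cor:cb-to-sb} applies.
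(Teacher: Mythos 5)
Your proposal is correct and follows exactly the route the paper intends: \Cref{thm:telescoping manifolds} supplies the strongly bounded clopen identity neighborhood \( U_\Sigma \), and combining this with the Polishness of \( \mcg(S) \) and the CB-generation hypothesis lets \Cref{cor:cb-to-sb} finish the argument. The paper treats this as immediate (hence the \qed in the statement), and your careful verification of each hypothesis matches its reasoning precisely.
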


\Cref{cor:telescoping manifolds} does not account for all SB-generated mapping class groups.
For instance, if \( S \) is a uniquely telescoping surface, then \( \mcg(S) \) is strongly bounded \cite{VlamisHomeomorphism} and hence SB-generated; however, it does not satisfy the hypotheses of the corollary. 
With this exception, it is natural to ask if the converse of \Cref{cor:telescoping manifolds} is true.

As noted above, Mann--Rafi gave a topological classification of the tame surfaces whose mapping class groups are CB-generated.
Therefore, in \Cref{cor:telescoping manifolds}, the requirement that \( \mcg(S) \) is CB-generated can be replaced with topological conditions on \( S \).
As these topological conditions, as well as the notion of a tame surface, are technical to state, we believe it best to refer the interested reader to \cite{MannLargescalea} for details.

A subset \( \Sigma \) of a surface \( S \) is \emph{displaceable} if there exists a homeomorphism \( f \co S \to S \) such that \( f(\Sigma) \cap \Sigma = \varnothing \); otherwise, it is \emph{non-displaceable}.
It follows from Mann--Rafi's classification of coarsely bounded mapping class groups \cite[Theorem~1.7]{MannLargescalea} that if \( S \) is as in \Cref{cor:telescoping manifolds}, then \( \mcg(S) \) fails to be strongly bounded whenever \( S \) contains a compact non-displaceable  subset.
For example, if \( S \) is as in \Cref{cor:telescoping manifolds} and among \( \{M_1,\ldots, M_n\} \) there are two non-homeomorphic telescoping surfaces, or at least three uniquely telescoping surfaces, then \( \mcg(S) \) is not strongly bounded. 

\begin{example}
	Let \( K \subset \br^2 \) be an embedded copy of the Cantor set.
	\Cref{cor:telescoping manifolds} implies that \( \mcg(\br^2 \ssm K) \) is SB-generated.
	Moreover, with respect to the word metric associated to a strongly bounded generating set, \( \mcg(\br^2\ssm K) \) is an infinite-diameter Gromov hyperbolic group. 
	This was previously deduced in the topological group setting as follows:
	Mann--Rafi \cite{MannLargescalea} showed that \( \mcg(\br^2\ssm K) \) is CB-generated in the compact-open topology.
	With respect to the word metric associated to a coarsely bounded generating set, Schaffer-Cohen \cite{SchafferGraphs} showed that \( \mcg(\br^2 \ssm K) \) is quasi-isometric to the ray graph.
	Bavard \cite{BavardHyperbolicity} showed that the ray graph is Gromov hyperbolic and infinite diameter.
	\Cref{cor:telescoping manifolds} promotes the coarsely bounded generating set to a strongly bounded generating set, yielding the fact that \( \mcg(\br^2 \ssm K) \) is an infinite-diameter Gromov hyperbolic SB-generated group.
\end{example}

From the definition, it readily follows that any quotient of an SB-generated group is itself SB-generated.
Below, in \Cref{prop:abelian}, we show that an abelian group is SB-generated if and only if it is finitely generated.
As an application of these facts, the abelianization of an SB-generated group is finitely generated (\Cref{cor:abelianization}). 
Therefore, from \Cref{cor:telescoping manifolds} we recover a theorem of Field--Patel--Rasmussen \cite[Theorem~1.4]{FieldStable}; in fact, we broaden their theorem to a larger class of surfaces.

\begin{corollary}\label{cor:abelianization big mcg}
	If \( S \) is as in \Cref{cor:telescoping manifolds}, then the abelianization of \( \mcg(S) \) is finitely generated.
	\qed
\end{corollary}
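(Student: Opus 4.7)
The plan is essentially to assemble the pieces that the introduction has already laid out: the corollary is a direct consequence of \Cref{cor:telescoping manifolds} combined with \Cref{cor:abelianization}, so the proof is a two-line deduction rather than any new work.

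First I would invoke \Cref{cor:telescoping manifolds} on the surface $S$ (which satisfies its hypotheses by assumption) to conclude that $\mcg(S)$ is SB-generated. Next, I would apply \Cref{cor:abelianization}, which asserts that the abelianization of any SB-generated group is finitely generated; this yields the conclusion immediately.

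For transparency, I would briefly recall the logic behind \Cref{cor:abelianization}, since the reader has just seen it stated in the paragraph preceding the corollary: SB-generation is preserved under taking quotients (a strongly bounded generating set of $G$ maps to a strongly bounded generating set of any quotient, because the pullback of a left-invariant metric is a left-invariant pseudo-metric, and one can pass to a metric quotient without increasing diameters), so the abelianization $\mcg(S)^{\mathrm{ab}}$ is SB-generated. Being abelian, \Cref{prop:abelian} then forces it to be finitely generated.

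There is no real obstacle here; the work is entirely done by \Cref{cor:telescoping manifolds} (which rests on \Cref{thm:telescoping manifolds} and \Cref{cor:cb-to-sb}) and by \Cref{prop:abelian}. The only thing to verify is that the hypotheses of \Cref{cor:telescoping manifolds} are exactly what is assumed in the statement, which they are by reference. One could also flag the comparison to \cite[Theorem~1.4]{FieldStable}, noting that the hypotheses here include surfaces beyond the scope of that earlier result, but this is commentary rather than part of the proof.
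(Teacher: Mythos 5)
Your proposal is correct and matches the paper's intended argument exactly: \Cref{cor:telescoping manifolds} gives that \( \mcg(S) \) is SB-generated, and \Cref{cor:abelianization} (via \Cref{prop:abelian} and closure of SB-generation under quotients) then yields that its abelianization is finitely generated. This is precisely why the paper states the corollary with no further proof.
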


\subsection*{Homeomorphism groups of well-ordered spaces}

Our final family arises from well-ordered spaces.
A \emph{well-ordered space} is a well-ordered set equipped with its order topology (see \Cref{sec:well-ordered} for definitions).
Up to homeomorphism, compact well-ordered sets are classified by two invariants, their limit capacity (an ordinal) and their degree (a natural number). 
We focus on the case where the limit capacity is a successor ordinal.

\begin{maintheorem3} 
	If the limit capacity of a compact well-ordered space is a successor ordinal, then its homeomorphism group is SB-generated.
\end{maintheorem3}

As a consequence, one obtains arbitrarily large families of non-finitely generated, non-strongly bounded SB-generated groups.

\begin{maincorollary3} 
	For any cardinal \( \kappa \), there exists a set of pairwise non-isomorphic, non-finitely generated, non-strongly bounded, SB-generated groups of cardinality \( \kappa \). 
\end{maincorollary3}

Countable well-ordered spaces can be realized as end spaces of surfaces, linking the theory of big mapping class groups with that of homeomorphism groups of well-ordered spaces; see \Cref{sec:well-ordered} for details.

\subsection*{SB-generated groups and unique Polish topologies}

Let \( G \) be an SB-generated group. 
It readily follows from basic properties of SB-generated groups (discussed in \Cref{section:basics}) that if \( G_1 \) and \( G_2 \) are topological groups abstractly isomorphic to \( G \), then both \( G_1 \) and \( G_2 \) are CB-generated and are quasi-isometric. 
The converse is more subtle. 
We conclude the introduction with exploring a version of this question in the setting of Polish groups.

A topological group is \emph{Polish} if its underlying topology is separable and completely metrizable. 
If a group admits a unique Polish group topology, then this topology is an isomorphism invariant of the group.
In particular, a natural place to look for SB-generated groups is among the CB-generated Polish groups in which the Polish group structure is unique.  
Under mild hypotheses, Kallman \cite{KallmanUniqueness} showed that homeomorphism groups of second-countable Hausdorff spaces  have a unique Polish topology; in particular, \( \Homeo_0(M) \) has a unique Polish group topology whenever \( M \) is a  manifold.

However, not every CB-generated Polish group with a unique Polish group topology is SB-generated.
In forthcoming joint work with T.~Ghaswala, S.~Iyer, and R.~Lyman, we show that all big mapping class group admit a unique Polish group topology. 
Yet, by Domat--Dickmann \cite{DomatBig}, some such groups surject onto \( \mathbb Q \), and hence cannot be SB-generated by \Cref{cor:abelianization}.
We record this fact below.

\begin{proposition}
	There exists a CB-generated topological group with a unique Polish group topology that is not SB-generated.
	\qed
\end{proposition}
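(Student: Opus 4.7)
The plan is to exhibit a big mapping class group as the required witness. I would aim to identify an infinite-type surface $S$ such that simultaneously (a) $\mcg(S)$ equipped with the compact-open topology is CB-generated, (b) $\mcg(S)$ admits a unique Polish group topology, and (c) $\mcg(S)$ surjects onto $\mathbb{Q}$. Given such an $S$, the group $\mcg(S)$ would satisfy the first two conditions of the proposition on the nose, and would fail to be SB-generated by \Cref{cor:abelianization}: were $\mcg(S)$ SB-generated, its abelianization would be a finitely generated abelian group, but any quotient of $\mcg(S)$ onto $\mathbb{Q}$ factors through its abelianization, and $\mathbb{Q}$ is not a quotient of any finitely generated abelian group.

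For ingredient (a), I would invoke the Mann--Rafi classification of tame surfaces whose mapping class groups are CB-generated in the compact-open topology. For ingredient (b), I would quote the forthcoming joint work with Ghaswala, Iyer, and Lyman cited in the excerpt, in which every big mapping class group is shown to admit a unique Polish group topology. For ingredient (c), I would appeal to Domat--Dickmann \cite{DomatBig}, who exhibit infinite-type surfaces whose mapping class groups surject onto $\mathbb{Q}$.

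The only real obstacle is combinatorial, not conceptual: I must verify that ingredients (a) and (c) can be arranged to hold \emph{for the same surface}. To this end, I would inspect the surfaces used in Domat--Dickmann's construction, read off their topological invariants (genus, end space structure, and the partition of ends by the relevant end-space orderings), and then check this topological data against the Mann--Rafi list of CB-generated cases. The introductory discussion, together with the authors' phrasing ``some such groups surject onto $\mathbb{Q}$,'' strongly suggests that a Domat--Dickmann surface of CB-generated type exists; making this explicit is the one nontrivial verification. Once a surface $S$ meeting all three requirements is pinned down, the proof is simply the formal combination of the three cited results with \Cref{cor:abelianization}.
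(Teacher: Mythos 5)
Your proposal is correct and follows essentially the same route as the paper: combine the forthcoming uniqueness-of-Polish-topology result for big mapping class groups, the Domat--Dickmann surjection onto \( \mathbb{Q} \), and \Cref{cor:abelianization} (via \Cref{prop:abelian}) to rule out SB-generation, with CB-generation supplied by the Mann--Rafi classification. You are in fact slightly more careful than the paper's one-line justification in flagging that one must confirm a single surface realizes both the CB-generation and the surjection onto \( \mathbb{Q} \); that verification is the only detail left implicit in both arguments.
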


\section*{Acknowledgments}

The author thanks Robbie Lyman and Jing Tao for helpful discussions and Sanghoon Kwak for the reference to the algebraic rigidity of homeomorphism groups of ordinals. 
The author was partially supported by NSF DMS-2212922 and PSC-CUNY Award 67380-00 55.


\section{SB-generated groups}\label{section:basics}

	In this section, we provide the basic properties of SB-generated groups.
	Recall from the introduction that a subset of a group \( G \) is \emph{strongly bounded} if it has finite diameter in every left-invariant metric on \( G \), and \( G \) is \emph{SB-generated} if it is generated by a strongly bounded set.

	The class of SB-generated groups is a proper subclass of the CB-generated groups introduced by Rosendal \cite{RosendalCoarse} in the category of topological groups; in particular, an SB-generated group is a discrete CB-generated group. 
	As a consequence, the statements below---through \Cref{prop:maximal}---are special cases of results in \cite[Section~2]{RosendalCoarse}. 
	Despite this, we reproduce the results here in the language of abstract groups, with the advantage that the reader does not have to translate from the more general setting of topological groups.

	\begin{remark}
		We only discuss the results in \cite[Section~2]{RosendalCoarse} most relevant to the discussion on hand; however, we strongly encourage the reader interested in working with SB-generated groups to read the section with discrete groups in mind. 
		For instance, there is a version of the Milnor--\v Svarc lemma \cite[Theorem~2.77]{RosendalCoarse} in this setting that we do not introduce here. 
	\end{remark}

	In what follows, we will work with the following preorder on the left-invariant pseudo-metrics on a group \( G \):
	Given two left-invariant pseudo-metrics \( d_1 \) and \( d_2 \), we write \( d_1 \preceq d_2 \) if there exists \( K > 0 \) such that 
	\[ 
		d_1(g,h) \leq K \cdot d_2(g,h) + K
	\]
	for every \( g,h \in G \).
	Note that if \( d_1 \preceq d_2 \) and \( d_2 \preceq d_1 \), then \( d_1 \) and \( d_2 \) are quasi-isometric. 

	We will be interested in maximal metrics.
	There are two natural notions of maximal, one being that every metric is below a maximal metric and the other being that no metric is above a maximal metric.
	A simple observation shows that these notions agree here: given any two left-invariant metrics \( d_1 \) and \( d_2 \) on \( G \), we have \( d_1,d_2 \preceq d_1 + d_2 \).

	\begin{Def}
		A left-invariant pseudo-metric \( d \) on \( G \) is \emph{maximal} if \( \rho \preceq d \) for every left-invariant pseudo-metric \( \rho \) on \( G \). 
	\end{Def}

	By definition, any two maximal metrics on a group are quasi-isometric. 
	Consequently, if a group admits a maximal metric, then every quasi-isometry invariant of this metric is an isomorphism invariant of the group.

	Given a generating set \( S \) for a group \( G \), define the associated \emph{word norm} \( |\cdot |_S \co G \to \bn \cup \{0\} \) by \( |g|_S = \min\{ n \in \bn\cup\{0\} : g \in S^n \} \), where \( S^0 = \{1\} \) and \( S^n = \{ s_1s_2 \cdots s_n : s_i \in S\} \) when \( n \in \bn \).
	The associated \emph{word metric}, denoted \( d_S \), is given by \( d(g,h) = |h^{-1}g| \).
	Note that the word metric is left invariant. 

	\begin{lemma}
		Let \( S \) be a strongly bounded generating set for an SB-generated group \( G \).
		If \( \rho \) is a left-invariant pseudo-metric on \( G \), then the identity map from \( (G, d_S) \) to \( (G, \rho) \) is Lipschitz. 
	\end{lemma}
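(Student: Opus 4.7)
The plan is to produce a constant $K > 0$ such that $\rho(g,h) \leq K \cdot d_S(g,h)$ for all $g, h \in G$. Since both metrics are left invariant, this reduces to the inequality $\rho(1, g) \leq K \cdot |g|_S$ for every $g \in G$, and I would split the argument into a word-decomposition step and a pseudo-metric-to-metric promotion step.

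The word-decomposition step is routine. Writing $g = s_1 \cdots s_n$ with $s_i \in S$ and $n = |g|_S$, the triangle inequality together with left-invariance of $\rho$ gives
\[ \rho(1, g) \leq \sum_{i=1}^n \rho(s_1 \cdots s_{i-1}, s_1 \cdots s_i) = \sum_{i=1}^n \rho(1, s_i). \]
So everything reduces to establishing that $\sup_{s \in S} \rho(1, s) < \infty$.

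The main subtlety, and the step I expect to require the most care, is that the definition of strong boundedness concerns left-invariant \emph{metrics}, while $\rho$ is only a pseudo-metric. I would bridge this by promoting $\rho$ to an honest left-invariant metric via addition of the discrete metric $\delta$ on $G$ (set $\delta(x,y) = 1$ for $x \neq y$, which is clearly left invariant). Then $\rho + \delta$ is a left-invariant metric on $G$, and by strong boundedness of $S$ its $(\rho+\delta)$-diameter is finite. Since $\rho \leq \rho + \delta$ pointwise, the $\rho$-diameter of $S$ is also finite.

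To finish, fix any $s_0 \in S$; since $\rho$ is real-valued, $\rho(1, s_0)$ is finite, and the triangle inequality yields
\[ \rho(1, s) \leq \rho(1, s_0) + \mathrm{diam}_\rho(S) < \infty \]
uniformly in $s \in S$. Taking $K$ to equal this finite quantity and feeding it back into the word-decomposition bound completes the argument.
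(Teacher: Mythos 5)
Your proof is correct and follows essentially the same route as the paper: decompose $h^{-1}g$ as a word in $S$, apply left-invariance and the triangle inequality, and bound $\rho(1,s)$ uniformly over $s\in S$ via strong boundedness. The one place you go beyond the paper is in carefully justifying why strong boundedness (stated for metrics) controls the pseudo-metric $\rho$ by passing to $\rho+\delta$; the paper elides this point, and your bridge is a valid way to fill it.
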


	\begin{proof}
		Let \( d = d_S \) be the word metric associated to \( S \).
		Let \( \rho \) be a pseudo-metric on \( G \). 
		As \( S \) is strongly bounded, there exists \( K > 0 \) such that \( \rho(1,s) \leq K \) for all \( s \in S \). 
		Given \( g, h \in G \), write \( h^{-1} g = s_1 s_2 \cdots s_n \), where \( s_i \in S \) and \( n = d(g,h) \). 
		Then
		\begin{align*}
			\rho(g,h)	&=		\rho(1,h^{-1}g) \\
					&=		\rho(1, s_1s_2 \cdots s_n) \\
					&\leq		\sum_{k=1}^n \rho(1, s_k) \\
					&\leq		K\cdot n \\
					&=		K\cdot d(g,h)
		\end{align*}
		The result follows as \( K \) is independent of \( g \) and \( h \). 
	\end{proof}

	As immediate corollaries, we see that any word metric associated to an SB-generating set is maximal and any two such word metrics are bi-Lipschitz equivalent. 

	\begin{corollary}
	\label{cor:maximal word}
		In an SB-generated group, the word metric associated to a strongly bounded generating set is maximal. 
		\qed
	\end{corollary}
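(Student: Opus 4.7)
The plan is to deduce this corollary as an immediate consequence of the preceding lemma, which already provides essentially all the content. Unpacking the definition of maximality, what I need to verify is that for every left-invariant pseudo-metric \( \rho \) on \( G \), one has \( \rho \preceq d_S \); that is, there exists \( K > 0 \) satisfying \( \rho(g,h) \leq K \cdot d_S(g,h) + K \) for all \( g, h \in G \).

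Given an arbitrary left-invariant pseudo-metric \( \rho \) on \( G \), I will simply invoke the preceding lemma: since \( S \) is a strongly bounded generating set, the identity map \( (G, d_S) \to (G, \rho) \) is Lipschitz, so there exists \( K > 0 \) with \( \rho(g,h) \leq K \cdot d_S(g,h) \) for all \( g,h \in G \). Adding the nonnegative constant \( K \) to the right-hand side only weakens the inequality, yielding \( \rho(g,h) \leq K \cdot d_S(g,h) + K \), which is exactly the condition \( \rho \preceq d_S \). Since \( \rho \) was arbitrary, \( d_S \) is maximal by definition.

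There is no real obstacle here: the corollary is simply a translation of the Lipschitz conclusion of the lemma into the language of the preorder \( \preceq \) on left-invariant pseudo-metrics. The only (trivial) step beyond quoting the lemma is observing that a Lipschitz bound is a special case of the affine bound appearing in the definition of \( \preceq \).
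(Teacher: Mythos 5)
Your proof is correct and matches the paper's intended argument exactly: the corollary is stated as an immediate consequence of the preceding Lipschitz lemma, and your only added step---observing that a Lipschitz bound implies the affine bound defining \( \preceq \)---is precisely the translation the paper leaves implicit.
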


	\begin{corollary}
		In an SB-generated group, the word metrics associated to any two strongly bounded generating sets are bi-Lipschitz equivalent. 
		\qed
	\end{corollary}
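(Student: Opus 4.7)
The plan is to reduce this immediately to the preceding lemma, applied twice with the roles of the two generating sets swapped. Let $S_1$ and $S_2$ be two strongly bounded generating sets for an SB-generated group $G$, and let $d_1 = d_{S_1}$ and $d_2 = d_{S_2}$ denote the associated word metrics. Since word metrics are left invariant, both $d_1$ and $d_2$ are left-invariant pseudo-metrics (in fact, metrics) on $G$, so each qualifies as a valid choice of $\rho$ in the previous lemma.

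First, I would apply the lemma with $S = S_1$ and $\rho = d_2$: because $S_1$ is strongly bounded, there exists $K_1 > 0$ such that $d_2(g,h) \leq K_1 \cdot d_1(g,h)$ for all $g,h \in G$. Then I would swap the roles, applying the lemma with $S = S_2$ and $\rho = d_1$: because $S_2$ is strongly bounded, there exists $K_2 > 0$ such that $d_1(g,h) \leq K_2 \cdot d_2(g,h)$ for all $g,h \in G$. Combining the two inequalities gives
\[
    \tfrac{1}{K_1}\, d_2(g,h) \leq d_1(g,h) \leq K_2 \cdot d_2(g,h)
\]
for every $g,h \in G$, which is precisely the bi-Lipschitz equivalence of $d_1$ and $d_2$.

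There is essentially no obstacle here: the entire content has been packaged into the preceding lemma. The only minor subtlety worth flagging in the write-up is the observation that the additive constant in the definition of $\preceq$ does not appear in the conclusion of the lemma (one gets a genuine Lipschitz bound, not just a quasi-isometry), which is exactly why the conclusion strengthens from quasi-isometric to bi-Lipschitz equivalent. Given the brevity, a one-line proof citing the previous lemma twice is sufficient, which matches the \texttt{\textbackslash qed}-after-statement style already adopted for the corollaries in the excerpt.
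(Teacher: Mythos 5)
Your proposal is correct and is exactly the argument the paper intends: the corollary is stated with an immediate \( \qed \) because it follows by applying the preceding Lipschitz lemma twice with the two generating sets' roles swapped, just as you do. Your remark that the lemma gives a genuine Lipschitz bound (no additive constant) is the right observation explaining why the conclusion is bi-Lipschitz rather than merely quasi-isometric.
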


	Our next proposition, \Cref{prop:maximal}, tells us that a group admits a maximal metric if and only if it is SB-generated.

	\begin{proposition} 
	\label{prop:maximal}
		A left-invariant pseudo-metric on a group is maximal if and only if it is quasi-isometric to the word metric associated to a strongly bounded generating set. 
	\end{proposition}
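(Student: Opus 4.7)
The backward direction is immediate from \Cref{cor:maximal word} together with the transitivity of $\preceq$: the word metric $d_S$ associated to a strongly bounded generating set is maximal, and if $\rho$ is quasi-isometric to $d_S$, then for any left-invariant pseudo-metric $\sigma$ we have $\sigma \preceq d_S \preceq \rho$, so $\rho$ is also maximal.

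For the forward direction, I would start with a maximal left-invariant pseudo-metric $d$ on $G$ and produce a strongly bounded generating set $S$ with $d_S$ quasi-isometric to $d$. The key observation is that every $d$-ball $B_R := \{g \in G : d(1,g) \leq R\}$ is strongly bounded: given any left-invariant pseudo-metric $\rho$, maximality supplies $K > 0$ with $\rho \leq Kd + K$, so the $\rho$-diameter of $B_R$ is at most $2KR + K$. The main content of the proof is then to produce some $R$ for which $B_R$ generates $G$; once this is done, $S := B_R$ is the desired strongly bounded generating set.

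To prove generation, I would suppose for contradiction that each $H_n := \langle B_n \rangle$ is a proper subgroup of $G$. Since $\bigcup_n H_n = G$, the chain $(H_n)$ does not stabilize, so there exists an increasing subsequence $(n_k)$ with $H_{n_k} \subsetneq H_{n_{k+1}}$, and for each $k \geq 1$ the set $B_{n_k} \setminus H_{n_{k-1}}$ is nonempty (otherwise $B_{n_k} \subset H_{n_{k-1}}$ would force $H_{n_k} \subset H_{n_{k-1}}$). Pick $g_k$ in this set, so that $d(1, g_k) \leq n_k$. Setting $\phi(g) = \min\{k : g \in H_{n_k}\}$, the subgroup property of the $H_{n_k}$ yields $\phi(g^{-1}) = \phi(g)$ and $\phi(gh) \leq \max(\phi(g), \phi(h))$, so $\rho(g,h) := a_{\phi(g^{-1}h)}$ defines a left-invariant pseudo-metric for any non-decreasing sequence $0 = a_0 \leq a_1 \leq \cdots$. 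Choosing $a_k = k(n_k + 1)$, one has $\phi(g_k) = k$ and therefore $\rho(1, g_k) = k(n_k+1)$ while $d(1, g_k) \leq n_k$, which precludes any inequality $\rho \leq Kd + K$ and contradicts maximality of $d$.

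The quasi-isometry $d \sim d_S$ for $S = B_R$ is then routine: if $g = s_1 \cdots s_n$ with $s_i \in S$, left-invariance of $d$ gives $d(1,g) \leq \sum_{i=1}^n d(1, s_i) \leq nR$, so $d \leq R \cdot d_S$, while $d_S \preceq d$ is just the maximality of $d$ applied to the left-invariant pseudo-metric $d_S$. I expect the main obstacle to be the construction of the pseudo-metric $\rho$ used to defeat maximality---specifically, extracting a strictly increasing subsequence of $(H_n)$ and locating each $g_k$ inside $B_{n_k}$ so that $\rho$ grows faster along $(g_k)$ than any affine rescaling of $d$.
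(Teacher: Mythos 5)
Your proof is correct and follows essentially the same route as the paper: both arguments consider the subgroups $H_n$ generated by the $d$-balls $B_n$, derive a contradiction with maximality by building a left-invariant pseudo-metric that grows superlinearly along a sequence escaping the chain, note that maximality makes the balls strongly bounded, and settle the backward direction via \Cref{cor:maximal word}. If anything, your version is slightly more careful than the paper's, since you pick the witnesses $g_k$ in $B_{n_k}\setminus H_{n_{k-1}}$ (rather than merely in $H_{n_k}\setminus H_{n_{k-1}}$), which is exactly what is needed to control $d(1,g_k)$.
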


	\begin{proof}
		Let \( G \) be a group and let \( d \) be a left-invariant pseudo-metric on \( G \). 
		First, assume that \( d \) is maximal. 
		For \( n \in \bn \cup \{ 0 \} \), let \( H_n \) be the subgroup generated by the set \( B_n =  \{ g \in G : d(1,g) \leq n \} \).  
		We claim that there exists \( n \in \bn \) such that \( G = H_n \). 
		If not, then we can define a left-invariant metric \( \rho \) by setting \( \rho(g,h) = \min \{ n^2  : n \in \bn \text{ and }  h^{-1}g \in H_n \} \). 
		Taking \( g_n \in H_n \ssm H_{n-1} \), we get a sequence such that \( \rho(1,g_n)/ d(1,g_n) \to \infty \) as \( n \to \infty \), implying that \( \rho \not\preceq d \) and therefore contradicting the maximality of \( d \). 
		Hence, \( G = H_n \) for some \( n \in \bn \). 
		The maximality of \( d \) then implies that \( B_n \) is a strongly bounded generating set for \( G \). 
		By \Cref{cor:maximal word}, the word metric associated to \( B_n \) is maximal, and as any two maximal metrics are quasi-isometric, \( d \) is quasi-isometric to the word metric associated to \( B_n \).

		Now, assume that \( d \) is quasi-isometric to the word metric associated to a strongly bounded generating set.
		Again by \Cref{cor:maximal word}, this word metric is maximal and hence so is \( d \).
	\end{proof}

	\begin{remark}
		\label{rem:motivation}
		\Cref{prop:maximal} captures the central motivation for studying SB-generated groups: 
		their word metrics are canonically associated to the group in the sense that they are maximal. 
		Consequently, any quasi-isometry invariant of such a word metric is an isomorphism invariant of the group.
	\end{remark}

	It is clear that every finitely generated group is SB-generated, allowing us to view SB-generated groups as a generalization of finitely generated groups.
	Serre, in studying actions on trees \cite{SerreArbres}, introduced the notion of uncountable cofinality as a generalization of finite generation.
	It will be helpful for us to see that SB-generated groups also have this property.

	A group has \emph{uncountable cofinality} if it cannot be expressed as the union of a strictly increasing sequence of subgroups. 
	It is more or less apparent from the proof of \Cref{prop:maximal} that an SB-generated group has uncountable cofinality; nonetheless, we give an argument.

	\begin{proposition}\label{prop:cofinal}
		SB-generated groups have uncountable cofinality.
	\end{proposition}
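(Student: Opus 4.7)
The plan is to argue by contradiction and mimic the construction used in the proof of \Cref{prop:maximal}. Suppose \( G \) is SB-generated with strongly bounded generating set \( S \), and suppose for contradiction that \( G = \bigcup_{n \in \bn} H_n \) where \( H_1 \subsetneq H_2 \subsetneq \cdots \) is a strictly increasing chain of subgroups. I would set \( H_0 = \{1\} \) and define
\[
	\rho(g,h) = \min\{ n \geq 0 : h^{-1} g \in H_n \}
\]
for \( g, h \in G \). Since \( \bigcup_n H_n = G \) this is well-defined, and the bulk of the argument consists of verifying that \( \rho \) is a left-invariant metric.

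For this verification, left-invariance follows because \( (ah)^{-1}(ag) = h^{-1}g \), and symmetry follows because \( H_n \) is a subgroup, so \( h^{-1}g \in H_n \) if and only if \( g^{-1}h \in H_n \). The nondegeneracy \( \rho(g,h) = 0 \iff g = h \) uses \( H_0 = \{1\} \). The key point is the triangle inequality: writing \( h^{-1}g = (h^{-1}k)(k^{-1}g) \), if \( h^{-1}k \in H_a \) and \( k^{-1}g \in H_b \) then \( h^{-1}g \in H_{\max(a,b)} \) because \( H_{\max(a,b)} \) is a subgroup containing both factors. Hence \( \rho(g,h) \leq \max\{\rho(g,k), \rho(k,h)\} \leq \rho(g,k) + \rho(k,h) \); in fact \( \rho \) is an ultrametric.

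Now the contradiction is immediate: since \( S \) is strongly bounded, there is \( N \in \bn \) such that \( \rho(1, s) \leq N \) for every \( s \in S \), i.e.\ \( S \subseteq H_N \). Then \( G = \langle S \rangle \subseteq H_N \), contradicting the strict containment \( H_N \subsetneq H_{N+1} \subseteq G \).

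The only step requiring any thought is the triangle inequality, and this is essentially automatic from the fact that the \( H_n \) form a nested chain of subgroups rather than arbitrary subsets; the rest is bookkeeping. The same construction already underlies the pseudo-metric \( \rho \) built in the proof of \Cref{prop:maximal}, so there is no genuine new obstacle here.
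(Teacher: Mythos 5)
Your proof is correct and follows essentially the same route as the paper: both define \( \rho(g,h) = \min\{ n : h^{-1}g \in H_n \} \) from the nested chain and use strong boundedness of \( S \) to trap it in some \( H_N \). Your addition of \( H_0 = \{1\} \) and the explicit verification that \( \rho \) is an ultrametric is a small refinement the paper leaves implicit (its \( \rho \) as literally written has \( \rho(g,g) = 1 \)), but there is no substantive difference in approach.
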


	\begin{proof}
		Let \( G \) be an SB-generated group, and let \( S \) be a strongly bounded generating set. 
		Given a sequence of subgroups \( \{H_n\}_{n\in\bn} \) such that \( G = \bigcup_{n\in\bn} H_n \), define \( \rho(g,h) = \min\{n \in \bn : h^{-1}g \in H_n \} \). 
		Then \( \rho \) is a left-invariant metric, and as \( S \) is strongly bounded, the \( \rho \)-diameter of \( S \) is bounded, implying that \( S \subset H_n \) for some \( n \in \bn \); in particular, \( G = H_n \). 
	\end{proof}

	It is an exercise to check that when restricted to countable groups, the notion of finitely generated, SB-generated, and having uncountable cofinality are all equivalent.
	And, as we will now argue, these notions also all agree in the setting of  abelian groups.

	\begin{proposition}\label{prop:abelian}
		If \( A \) is an abelian group, then the following are equivalent:
		\begin{enumerate}[(i)]
			\item \( A \) is finitely generated.
			\item \( A \) is SB-generated.
			\item \( A \) has uncountable cofinality.
		\end{enumerate}
	\end{proposition}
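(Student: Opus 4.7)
The plan is to prove the only non-trivial implication (iii) $\Rightarrow$ (i); the other two follow from the definition and from \Cref{prop:cofinal}. The guiding principle is that uncountable cofinality is inherited by quotients, so the strategy is to rule out, as quotients of $A$, every abelian group with an obvious countable exhaustion. The three archetypes to forbid are $\mathbb{Q} = \bigcup_n \frac{1}{n!}\mathbb{Z}$, the Prüfer groups $\mathbb{Z}(p^\infty) = \bigcup_n \mathbb{Z}/p^n$, and infinite direct sums of nontrivial cyclic groups (exhausted by partial sums), all of which visibly have countable cofinality.

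First I would analyze the torsion-free quotient $B := A/\tau(A)$, which inherits uncountable cofinality. The key step, and the one I expect to be the main obstacle, is showing that $B$ has finite $\mathbb{Z}$-rank: if not, then $B$ contains a free abelian subgroup $F \cong \mathbb{Z}^{(\mathbb{N})}$, and the surjection $F \to \mathbb{Q}$ sending $e_n \mapsto 1/n!$ extends, via the injectivity of $\mathbb{Q}$ in the category of abelian groups, to a surjection $B \twoheadrightarrow \mathbb{Q}$, contradicting the guiding principle. The subtle point here is that uncountable cofinality is not inherited by subgroups in general, so one cannot simply argue from the subgroup $F$; invoking the injectivity of $\mathbb{Q}$ to manufacture a genuine quotient is what carries the argument. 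Once $B$ has finite rank $r$, I would induct on $r$ as follows: choosing a full-rank free subgroup $\mathbb{Z}\langle e_1,\ldots,e_r\rangle \subseteq B$ and letting $V = \mathbb{Q}\langle e_1,\ldots,e_{r-1}\rangle$ inside $B\otimes\mathbb{Q}$, the image of $B$ in $(B\otimes\mathbb{Q})/V \cong \mathbb{Q}$ is a subgroup of $\mathbb{Q}$, hence countable, and is a quotient of $B$, hence has uncountable cofinality; by the exercise cited earlier it is finitely generated, hence isomorphic to $\mathbb{Z}$. This splits the projection, giving $B \cong (B\cap V) \oplus \mathbb{Z}$ and closing the induction with $B \cong \mathbb{Z}^r$.

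Because $\mathbb{Z}^r$ is free, the sequence $0 \to \tau(A) \to A \to \mathbb{Z}^r \to 0$ splits, yielding $A \cong \tau(A) \oplus \mathbb{Z}^r$ and reducing the problem to showing $\tau(A)$ is finite. Decomposing $\tau(A) = \bigoplus_p \tau(A)_p$, only finitely many primary summands can be nonzero, else partial sums yield a strictly increasing exhaustion of $\tau(A)$. For each nonzero $\tau(A)_p$, the subgroups $U_k = \{a : p^k a = 0\}$ form an increasing exhaustion, so uncountable cofinality of $\tau(A)_p$ (as a direct summand, hence quotient, of $\tau(A)$) forces the chain to stabilize, making $\tau(A)_p$ of bounded exponent. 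The structure theorem for bounded abelian $p$-groups writes it as a direct sum of cyclic groups, and once more uncountable cofinality forbids infinitely many summands. Hence each $\tau(A)_p$ is finite, $\tau(A)$ is finite, and $A$ is finitely generated.
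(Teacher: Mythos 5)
Your proof is correct, but it takes a genuinely different route from the paper. The paper proves the contrapositive in one stroke: it embeds a non--finitely generated abelian group \( A \) into a divisible group \( \oplus_{i\in I} A_i \) with each \( A_i \) a copy of \( \mathbb{Q} \) or a quasicyclic group (Fuchs's structure theorems), arranges each projection \( A \to A_i \) to be nontrivial, and then extracts a countable-cofinality witness either from infinitely many coordinates (pulling back the partial sums) or, when \( I \) is finite, from some non--finitely generated projection image, which is countable since \( \mathbb{Q} \) and the quasicyclic groups are. Your argument instead splits \( A \) along its torsion subgroup and treats the two pieces separately: the torsion-free quotient is shown to be \( \mathbb{Z}^r \) via a rank bound plus induction, and the torsion part is shown finite via primary decomposition and Pr\"ufer's theorem on bounded groups. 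The one genuinely delicate point---that uncountable cofinality does not pass to subgroups, so an infinite-rank free subgroup is not by itself a contradiction---you handle correctly by using the injectivity of \( \mathbb{Q} \) to promote the surjection \( \mathbb{Z}^{(\mathbb{N})} \twoheadrightarrow \mathbb{Q} \) to a surjection from all of \( B \); the paper avoids this issue entirely because its \( A_i \) are already quotient targets. Your route is longer and leans on the exercise about countable groups stated just before the proposition (legitimately, since it precedes the statement) as well as on Pr\"ufer's theorem, but it buys a sharper conclusion along the way, namely the explicit decomposition \( A \cong \tau(A) \oplus \mathbb{Z}^r \) with \( \tau(A) \) finite, whereas the paper's divisible-hull argument is shorter and handles torsion and torsion-free phenomena uniformly.
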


	\begin{proof}
		By \Cref{prop:cofinal}, we need only show that an abelian group with uncountable cofinality is finitely generated. 
		Let \( A \) be an abelian group that cannot be finitely generated. 
		Using structure theorems for abelian groups, specifically \cite[Theorem~23.1 and Theorem~24.1]{FuchsInfinite}, we can realize \( A \) as a subgroup of a group of the form \( \oplus_{i \in I} A_i \), where each \( A_i \) is either quasicyclic or isomorphic to the rationals. 
		By forgetting terms in the summand, we may assume that the natural projection \( A \to A_i \) is nontrivial for each \( i \in I \). 
		We have two cases: either the cardinality of \( I \) is finite or infinite.

		If the cardinality of \( I \) is infinite, then by choosing a denumerable subset \( J \) of \( I \), we obtain a homomorphism \( A \to \oplus_{j\in J} A_j \) such that the composition with the natural projection to \( A_j \) is nontrivial. 
		Choosing an enumeration of \( J = \{ j_n \}_{n\in \mathbb N} \) and setting \( B_n = \oplus_{i=1}^n A_{j_i} \), we see that \( \oplus_{j\in J} A_j \) fails to have uncountable cofinality, as \( \oplus_{j\in J} A_j = \bigcup_{n\in \bn} B_n \).
		Pulling back the \( B_n \) to \( A \), we see that \( A \) does not have uncountable cofinality.

		Now suppose that \( I \) is finite. 
		If the projection of \( A\) to each of the \( A_i \) is finitely generated, then \( A \) is contained in a finitely generated subgroup of \( \oplus_{i \in I} A_i \).
		But every subgroup of a finitely generated abelian group is finitely generated; hence, we can conclude  there exists \( i \in I \) such that the image of the projection of \(  A \) to \( A_i \) is not finitely generated. 
		Therefore, as a non-finitely generated countable group, the image of \( A \) in \( A_i \) fails to have uncountable cofinality, and hence so does \(  A \).
	\end{proof}

	Any image of an SB-generated group under a homomorphism is itself SB-generated.
	Therefore, as a corollary, we have that the abelianization of an SB-generated group is necessarily finitely generated, a useful tool for establishing that a group fails to be SB-generated.

	\begin{corollary} 
	\label{cor:abelianization}
		The abelianization of an SB-generated group is finitely generated.
		\qed
	\end{corollary}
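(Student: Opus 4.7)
The statement is essentially formal once the preceding results are in place: it follows by combining the fact that homomorphic images of SB-generated groups are SB-generated with \Cref{prop:abelian}, which identifies SB-generated abelian groups as precisely the finitely generated ones. The only substantive step is verifying the first fact, which is asserted without proof in the text just before the corollary.

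The plan is to show that if $\phi \co G \twoheadrightarrow H$ is a surjective homomorphism and $G$ is generated by a strongly bounded set $S$, then $H$ is generated by the strongly bounded set $\phi(S)$. That $\phi(S)$ generates $H$ is immediate from surjectivity. For the strong boundedness of $\phi(S)$, let $\rho$ be an arbitrary left-invariant metric on $H$ and define the pullback
\[
\bar\rho(g_1, g_2) = \rho(\phi(g_1), \phi(g_2))
\]
on $G$, which is a left-invariant pseudo-metric because $\phi$ is a homomorphism. Applying the Lipschitz lemma stated just before \Cref{cor:maximal word}, which is deliberately formulated for pseudo-metrics, there is a constant $K > 0$ with $\bar\rho(1, s) \leq K$ for every $s \in S$; equivalently, $\rho(1, \phi(s)) \leq K$ for every $s \in S$. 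Hence $\phi(S)$ has $\rho$-diameter at most $2K$, and since $\rho$ was arbitrary, $\phi(S)$ is strongly bounded in $H$.

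To finish, apply this observation to the canonical surjection $G \twoheadrightarrow G/[G,G] = G^{\mathrm{ab}}$: the group $G^{\mathrm{ab}}$ is SB-generated, and since it is abelian, \Cref{prop:abelian} forces it to be finitely generated.

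The only delicate point, and the nearest thing to an obstacle, is that pulling back a metric along a non-injective homomorphism yields a pseudo-metric rather than a metric. This is cleanly absorbed by the pseudo-metric formulation of the Lipschitz lemma; without that formulation one would have to perturb $\bar\rho$ by an auxiliary left-invariant metric on $G$ and argue separately that the perturbation does not destroy boundedness of $S$, which would require more care.
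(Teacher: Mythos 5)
Your proof follows the paper's route exactly: the paper derives the corollary from the observation that homomorphic images of SB-generated groups are SB-generated together with \Cref{prop:abelian}, and your pullback-pseudo-metric argument correctly fills in the one fact the paper leaves unproved. The argument is correct, and your closing remark about why the pseudo-metric formulation of the Lipschitz lemma is what makes the pullback step clean is a fair observation.
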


	\subsection{Locally strongly bounded Polish groups}

	We now give a condition on a CB-generated Polish group that guarantees it is SB-generated.
	Recall that a topological group is \emph{Polish} if, as a topological space, it is separable and completely metrizable. 

	\begin{Def}
		A topological group is \emph{locally strongly bounded} if it admits an open neighborhood of the identity that is strongly bounded.
	\end{Def}

	The promotion of CB-generation to SB-generation in locally strongly bounded Polish groups will readily follow from the following proposition of Rosendal.

	\begin{proposition}[{\cite[Proposition~2.15]{RosendalCoarse}}]
		\label{prop:cb in polish}
		Let \( G \) be a Polish group.
		A subset \( A \) of \( G \) is coarsely bounded if and only if for every open neighborhood \( V \) of the identity in \( G \) there exists a finite set \( F \subset G \) and \( k \in \bn \) such that \( A \subset (FV)^k \).
		\qed
	\end{proposition}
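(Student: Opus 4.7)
The plan is to prove the two implications separately, with $(\Leftarrow)$ being a direct diameter estimate using left-invariance and $(\Rightarrow)$ requiring the construction of a continuous left-invariant pseudometric that certifies unboundedness of $A$ whenever the covering condition fails.

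For $(\Leftarrow)$, I would fix an arbitrary continuous left-invariant pseudometric $d$ on $G$ and apply the hypothesis to $V = \{g \in G : d(1, g) < 1\}$, which is open and contains the identity. This yields a finite $F \subset G$ and $k \in \bn$ with $A \subset (FV)^k$. Setting $\alpha = \max_{f \in F} d(1, f)$ and noting $d(1, v) \leq 1$ for every $v \in V$, any product $y = f_1 v_1 \cdots f_k v_k$ in $(FV)^k$ satisfies $d(1, y) \leq k(\alpha + 1)$ by iterating the inequality $d(1, ab) \leq d(1, a) + d(1, b)$ (a consequence of left-invariance and the triangle inequality). Hence $A$ has finite $d$-diameter, and since $d$ was arbitrary, $A$ is coarsely bounded.

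For $(\Rightarrow)$, I would argue the contrapositive: suppose some open symmetric identity neighborhood $V$ satisfies $A \not\subset (FV)^k$ for every finite $F \subset G$ and every $k \in \bn$. I would construct a continuous left-invariant pseudometric $d$ in which $A$ has infinite diameter. The natural candidate is a word-type pseudometric whose unit ball is essentially $V$: start from $d_V(g, h) = \inf\{k : g^{-1} h \in V^k\}$, which is left-invariant and has open sublevel sets (since $V$ is open and products of open sets are open in a topological group), hence continuous. If $V$ generates $G$, then $d_V$ is finite-valued and yields the claim directly; otherwise, the construction must be augmented using a countable dense subset $D \subset G$ (available by separability) to absorb translations by elements outside $\langle V \rangle$, producing a finite-valued continuous left-invariant pseudometric whose bounded sets have the form $(FV)^k$ for finite $F \subset D$.

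The main obstacle is the augmentation step: matching topology-derived balls of the constructed pseudometric with concrete product sets of the form $(FV)^k$, so that $d$-boundedness translates precisely to the covering property at $V$. Once every $d$-bounded set is shown to lie inside some $(FV)^k$, the failure of this containment for $A$ forces $A$ to be $d$-unbounded, completing the contrapositive. The technical heart lies in choosing the augmentation so that finite witnesses $F \subset D$ remain genuinely finite (otherwise the comparison with the hypothesis ``$A \not\subset (FV)^k$'' collapses) while still exhausting $G$ as $k$ grows; this balance is exactly where separability of $G$ is used in an essential way.
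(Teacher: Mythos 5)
The paper does not prove this proposition; it is quoted verbatim from Rosendal \cite[Proposition~2.15]{RosendalCoarse} with the \( \qed \) attached to the statement, so there is no in-paper argument to compare yours against. Judged on its own terms, your \( (\Leftarrow) \) direction is complete and correct: taking \( V \) to be the open unit ball of an arbitrary continuous left-invariant pseudometric and iterating \( d(1,ab) \le d(1,a)+d(1,b) \) gives the bound \( k(\alpha+1) \) on \( d(1,\cdot) \) over \( (FV)^k \), hence finite diameter. (One small point: the paper's definition of coarsely bounded uses continuous left-invariant \emph{metrics}, not pseudometrics, but in a Polish group these give the same notion --- add a compatible left-invariant metric from Birkhoff--Kakutani to any continuous left-invariant pseudometric --- so this is harmless, though worth a sentence.)

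The \( (\Rightarrow) \) direction, however, is a plan rather than a proof, and you say so yourself: the ``augmentation step'' you defer is the entire mathematical content of that implication. The correct skeleton is the one you gesture at --- fix a symmetric open \( V \) witnessing the failure, take a countable dense set \( D=\{g_n\} \), form the increasing exhaustive symmetric chain \( E_n = (F_nV)^{2^n} \) with \( F_n = \{1, g_1^{\pm1},\dots,g_n^{\pm1}\} \) (exhaustive because \( DV = G \) by density and openness of \( V \); note \( E_nE_n \subset E_{n+1} \)) --- but you must then actually convert this chain into a continuous left-invariant pseudometric. That is done by the Birkhoff--Kakutani-style chaining construction: set \( d(g,h) = \inf\{\sum_i 2^{n_i} : g^{-1}h = x_1\cdots x_m,\ x_i \in E_{n_i}\} \) and prove the key lower bound \( d(g,h) \ge 2^{n-1} \) whenever \( g^{-1}h \notin E_n \), so that \( d \)-bounded sets lie in some \( E_n = (F_nV)^{2^n} \); continuity follows because \( E_1 \supset V \) is an open identity neighborhood. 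Without that lower-bound lemma (or a citation to it), the claim that ``\( d \)-boundedness translates precisely to the covering property at \( V \)'' is asserted, not established, and the symmetrization issue \( \left((FV)^k\right)^{-1} = (VF^{-1})^k \not\subset (F^{-1}V)^k \) in general also needs to be addressed when arranging \( E_n = E_n^{-1} \). As written, the forward implication has a genuine gap at exactly the step you identify as the technical heart.
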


	Before getting to the main proposition, we need a quick lemma stating that the product of two strongly bounded sets is strongly bounded.

	\begin{lemma}\label{lem:product of sb is sb}
		If \( A \) and \( B \) are strongly bounded sets in a group, then the product \[ AB = \{ab : a\in A, b \in B\} \] is strongly bounded.
	\end{lemma}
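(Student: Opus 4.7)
The plan is to fix an arbitrary left-invariant metric $\rho$ on the ambient group and bound the $\rho$-diameter of $AB$. For $a_1, a_2 \in A$ and $b_1, b_2 \in B$, the triangle inequality combined with left-invariance of $\rho$ gives
\[
	\rho(a_1 b_1, a_2 b_2) \leq \rho(a_1 b_1, a_1 b_2) + \rho(a_1 b_2, a_2 b_2) = \rho(b_1, b_2) + \rho(a_1 b_2, a_2 b_2).
\]
The first summand is bounded by the $\rho$-diameter of $B$, which is finite because $B$ is strongly bounded. The main obstacle is the second summand: left-invariance of $\rho$ cannot convert a shared right factor into anything useful, so $\rho(a_1 b_2, a_2 b_2)$ is not directly controlled by the $\rho$-diameter of $A$.

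To handle this, I would introduce an auxiliary left-invariant pseudo-metric $\sigma$ on the ambient group defined by
\[
	\sigma(g,h) = \sup_{b \in B} \rho(gb, hb).
\]
Finiteness of the supremum follows from left-invariance of $\rho$ together with strong boundedness of $B$, via the estimate $\rho(gb, hb) \leq \rho(g,h) + 2\sup_{b \in B}\rho(1,b)$. Symmetry and the triangle inequality for $\sigma$ are inherited from $\rho$, and left-invariance of $\sigma$ holds because left-multiplication commutes past the right-multiplication by $b$ inside the supremum. Replacing $\sigma$ by $\sigma + \rho$, if one wishes to have a genuine metric rather than a pseudo-metric, preserves these properties.

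Since $A$ is strongly bounded, its $\sigma$-diameter is finite, and by construction this uniformly bounds $\rho(a_1 b, a_2 b)$ over all $a_1, a_2 \in A$ and $b \in B$. Combining with the earlier estimate yields a uniform bound on $\rho(a_1 b_1, a_2 b_2)$, so $AB$ has finite $\rho$-diameter. Since $\rho$ was arbitrary, $AB$ is strongly bounded. The substantive step is constructing $\sigma$; the rest is bookkeeping.
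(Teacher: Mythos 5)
Your proof is correct, but it takes a genuinely different and more roundabout route than the paper. The paper's argument is a direct norm computation with the given metric alone: writing \( |g| := d(1,g) \), left-invariance gives both subadditivity \( |gh| \le |g| + |h| \) and symmetry \( |g| = |g^{-1}| \), so
\[
	d(a_1b_1, a_2b_2) = |b_1^{-1}a_1^{-1}a_2b_2| \le |b_1| + |a_1| + |a_2| + |b_2| \le 4D .
\]
In particular, the ``obstacle'' you identify is not actually one: the term \( \rho(a_1b_2, a_2b_2) = \rho(1, b_2^{-1}a_1^{-1}a_2b_2) \) is controlled directly by these four norms; it is just not controlled by the diameter of \( A \) \emph{alone}, which is all you claimed. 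Your workaround --- introducing the auxiliary left-invariant pseudo-metric \( \sigma(g,h) = \sup_{b\in B}\rho(gb,hb) \), upgrading it to the metric \( \sigma + \rho \), and then invoking the strong boundedness of \( A \) against this \emph{new} metric --- is valid: all the claimed properties of \( \sigma \) (finiteness, left-invariance, triangle inequality) check out, and the final assembly is sound. What your approach buys is a genuine use of the universal quantifier in the definition of strong boundedness (boundedness in \emph{every} left-invariant metric, including ones manufactured from \( \rho \) and \( B \)), which is a useful trick to have in hand; what it costs is length and an explicit constant --- the paper's proof yields the clean bound \( 4D \) in terms of \( \rho \) itself, while yours bounds the diameter by \( \operatorname{diam}_\rho(B) \) plus the (finite but unspecified) \( \sigma \)-diameter of \( A \).
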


	\begin{proof}
		Let \( d \) be a left-invariant metric on \( G \).
		As \( A \) and \( B \) are strongly bounded, there exists \( D > 0 \) such that \( d(1,a), d(1,b) < D \) for all \( a \in A \) and \( b \in B \).
		Note that left-invariance implies \( d(1,g) = d(1,g^{-1}) \) for all \( g \in G \). 
		For \( i \in \{1,2\} \), let \( a_i \in A \) and \( b_i \in B \). 
		We want to bound \( d(a_1b_1, a_2b_2) \). 
		This is accomplished as follows:
		\begin{align*}
			d(a_1b_1,a_2b_2)	&= d(1,b_1^{-1}a_1^{-1}a_2b_2) \\
												&\leq d(1,b_1) + d(1, a_1) + d(1,a_2) + d(1,b_2) \\
												&\leq 4D.
		\end{align*}
		Hence, \( AB \) is strongly bounded. 
	\end{proof}

	\begin{proposition} \label{prop:cb-to-sb}
		Let \( G \) be a Polish group.
		If \( G \) is locally strongly bounded, then every coarsely bounded subset of \( G \) is strongly bounded.
	\end{proposition}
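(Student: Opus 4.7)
The plan is to apply \Cref{prop:cb in polish} with a particularly well-chosen open neighborhood of the identity, namely the strongly bounded one provided by the local strong boundedness hypothesis, and then use \Cref{lem:product of sb is sb} to bootstrap this into strong boundedness of an arbitrary coarsely bounded set.

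More concretely, let $V$ be an open neighborhood of the identity in $G$ that is strongly bounded; this exists by hypothesis. Let $A \subset G$ be coarsely bounded. By \Cref{prop:cb in polish} applied to the neighborhood $V$, there exist a finite set $F \subset G$ and $k \in \bn$ such that $A \subset (FV)^k$. Any finite subset of a group is strongly bounded (every left-invariant metric assigns it finite diameter), so $F$ is strongly bounded. Then $FV$ is strongly bounded by \Cref{lem:product of sb is sb}, and iterating this lemma $k-1$ more times shows that $(FV)^k$ is strongly bounded as well. Since any subset of a strongly bounded set is strongly bounded (finite diameter is inherited by subsets), it follows that $A$ is strongly bounded.

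There is essentially no obstacle here: the argument is a direct combination of Rosendal's characterization of coarsely bounded sets in Polish groups with the elementary product lemma, and the only observation needed beyond those two ingredients is that strong boundedness passes to subsets, which is immediate from the definition. The entire content of the proposition is that the hypothesis of local strong boundedness lets one feed a useful neighborhood $V$ into \Cref{prop:cb in polish}.
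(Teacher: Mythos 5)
Your argument is correct and is essentially identical to the paper's proof: both apply \Cref{prop:cb in polish} to the strongly bounded neighborhood $V$, use \Cref{lem:product of sb is sb} to see that $(FV)^k$ is strongly bounded, and conclude since strong boundedness passes to subsets. Your version just makes explicit the (true and immediate) observations that finite sets are strongly bounded and that the product lemma must be iterated.
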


	\begin{proof}
		Let \( A \subset G \) be coarsely bounded. 
		As \( G \) is locally strongly bounded, there exists an open neighborhood of the identity, call it \( V \), that is strongly bounded.
		By \Cref{prop:cb in polish}, there exists a finite set \( F \subset G \) and \( k \in \bn \) such that \( A \subset (FV)^k \). 
		As \( (FV)^k \) is a product of strongly bounded groups, \Cref{lem:product of sb is sb} implies it is strongly bounded.
		Therefore, as \( A \) is a subset of a strongly bounded set, it must be strongly bounded itself.
	\end{proof}

	Applying \Cref{prop:cb-to-sb} to a CB-generating set yields the following corollary.

	\begin{corollary} 
	\label{cor:cb-to-sb}
		Every locally strongly bounded CB-generated Polish group is SB-generated.
		\qed
	\end{corollary}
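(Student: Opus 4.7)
The plan is to chain together the two pieces of machinery already assembled in the subsection. The hypothesis ``CB-generated'' gives us a coarsely bounded subset $A \subseteq G$ that generates $G$. The hypothesis ``locally strongly bounded'' then lets us invoke \Cref{prop:cb-to-sb}, which upgrades every coarsely bounded set in $G$ to a strongly bounded one; in particular, $A$ is strongly bounded. Since $A$ still generates $G$, this is by definition SB-generation.

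In more detail, I would first fix notation: choose a coarsely bounded generating set $A$ guaranteed by the CB-generation assumption. Then apply \Cref{prop:cb-to-sb} verbatim to $A$ to conclude that $A$ is strongly bounded. Finally, observe that a strongly bounded generating set is exactly what is required in the definition of SB-generated, so $G$ is SB-generated.

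There is no real obstacle here; the work has all been done in \Cref{prop:cb-to-sb} (which in turn relied on Rosendal's \Cref{prop:cb in polish} and \Cref{lem:product of sb is sb}). The corollary is a one-line deduction, which explains the \qed on the statement line in the excerpt. The only thing to be careful about is not to claim anything stronger than ``CB-generating set becomes SB-generating set''—in particular one should not assert that every left-invariant metric on $G$ is quasi-isometric to every continuous left-invariant metric, as the point of the SB/CB distinction is precisely that discontinuous metrics also come into play; but all that is needed for the conclusion is the existence of a strongly bounded generating set, which is exactly what the argument delivers.
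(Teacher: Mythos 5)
Your proposal is correct and is exactly the paper's argument: the paper introduces the corollary with the sentence ``Applying \Cref{prop:cb-to-sb} to a CB-generating set yields the following corollary,'' which is precisely your chain of taking a coarsely bounded generating set, upgrading it to a strongly bounded one via \Cref{prop:cb-to-sb}, and noting it still generates. Nothing further is needed.
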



\section{Local strong distortion}\label{section:distortion}
	
	In this section, we introduce a technique for certifying that a subset of a group is strongly bounded. 
	Calegari--Freedman \cite{CalegariDistortion} introduced the notion of strong distortion for a group, and in the appendix of the same paper, Cornulier showed that a group with strong distortion is strongly bounded. 
	Here, we restrict the notion of strong distortion to subsets and reproduce Cornulier's argument in this setting to show that strongly distorted subsets are strongly bounded. 

	\begin{Def}[Strong distortion for subsets]
		A subset \( A \) of a group \( G \) is \emph{strongly distorted} if there exist \( m \in \bn \) and \( \{w_n\}_{n\in\bn} \subset \bn \) such that for every sequence \( \{a_n\}_{n\in\bn} \subset A \) there exists a subset \( S \) of \( G \) of cardinality \( m \) satisfying \( a_n \in S^{w_n} \). 
	\end{Def}

	The argument below showing that a strongly distorted subset is strongly bounded does not rely on the uniform constant \( m \), so we introduce the following weaker condition in case it is useful.
	Though in the latter sections, we will be able to produce uniform constants. 

	\begin{Def}
		A subset \( A \) of a group \( G \) is \emph{sequentially distorted} if there exists \( \{w_n\}_{n\in\bn} \subset \bn \) such that for every sequence \( \{a_n\}_{n\in\bn} \subset A \) there exists a finite subset \( S \) of \( G \) satisfying \( a_n \in S^{w_n} \). 
	\end{Def}

	\begin{lemma}\label{lem:distorted implies bounded}
		Every sequentially distorted subset of a group is strongly bounded. 
	\end{lemma}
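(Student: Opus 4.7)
The plan is to argue by contradiction, adapting Cornulier's scaling trick (referenced in the introduction of this section) from the strongly distorted setting to the sequentially distorted setting. Suppose $A \subset G$ is sequentially distorted with witness sequence $\{w_n\}_{n\in\bn}$ but fails to be strongly bounded. Then some left-invariant metric $d$ on $G$ gives $A$ infinite $d$-diameter; since $d$ is left-invariant and symmetric, this is equivalent to $\sup_{a \in A} d(1,a) = \infty$.

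The key idea is to exploit this unboundedness to manufacture a sequence in $A$ that is incompatible with the sequential distortion condition. First I would use $d$-unboundedness to choose, for each $n \in \bn$, a point $a_n \in A$ with
\[ d(1, a_n) > n\cdot w_n. \]
Next I would feed this particular sequence $\{a_n\}$ into the sequential distortion hypothesis to produce a finite set $S \subset G$ with $a_n \in S^{w_n}$ for every $n$. Setting $K := \max_{s \in S} d(1, s)$, which is finite because $S$ is finite, and writing each $a_n$ as a length-$w_n$ word in $S$, an iterated application of left-invariance and the triangle inequality yields $d(1, a_n) \leq K\cdot w_n$. Combining this with the lower bound above gives $n w_n < K w_n$, hence $n < K$ for every $n \in \bn$, the desired contradiction.

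There is essentially no serious obstacle here: the argument is a direct diagonal/scaling trick, and the only real input is the observation that a finite set has bounded diameter in any left-invariant metric. Note that the transition from strong distortion (with a uniform cardinality bound $m$ on $S$) to sequential distortion (where $S$ is only required to be finite) is free, since uniformity of $|S|$ plays no role in the estimate. The one small subtlety is making sure the bad sequence $\{a_n\}$ can be chosen without reference to $S$ (which will only be produced afterwards by the distortion hypothesis); this is handled by using only the data $\{w_n\}$, which is intrinsic to $A$, to specify the required growth rate.
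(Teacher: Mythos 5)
Your proof is correct and follows essentially the same contradiction-plus-scaling argument as the paper; the only cosmetic difference is that you demand $d(1,a_n) > n\cdot w_n$ where the paper uses $d(1,a_n)\geq w_n^2$ (the latter forcing a preliminary reduction to the case $w_n\to\infty$, which your choice avoids). No gaps.
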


	\begin{proof}
		Let \( A \) be a sequentially distorted subset of a group \( G \), and let \( \{ w_n \}_{n\in\bn} \subset \bn \) be the associated sequence. 
		As finite subsets are always strongly distorted and strongly bounded, we may assume that \( A \) is infinite, so that \( w_n \to \infty \) as \( n \to \infty \).   
		Fix  a left-invariant metric \( d \) on \( G \). 
		Suppose \( A \) has infinite \( d \)-diameter, allowing us to choose a sequence \( \{ a_n \}_{n\in\bn} \subset A \) such that \( d(1,a_n) \geq w_n^2 \). 
		By assumption, there exists a finite set \( S \) such that \( a_n \in S^{w_n} \). 
		As \( S \) is finite, we can define \( K = \max\{ d(1, s) : s \in S \} \). 
		It follows that \( d(1, a_n) \leq K\cdot w_n \) for all \( n \in \bn \), contradicting---for large \( n \)---the assumption that \( d(1, a_n) \geq w_n^2 \).
		Therefore, the \( d \)-diameter of \( A \) is bounded.
	\end{proof}

	In each of the families of SB-generated groups we establish below, we will rely on a fragmentation result.
	As a consequence, the generating sets we consider will have the form \( A_1A_2 \cdots A_n \) for some finite collection of subsets \( A_1, \ldots, A_n \).
	We will then show each of the \( A_i \) is strongly distorted.
	The final lemma of this section allows us to promote the strong distortion of each of the \( A_i \) to strong distortion of the product \( A_1A_2\cdots A_n \).

	\begin{lemma}
		\label{lem:product of distorted is distorted}
		If \( A_1, \ldots, A_n \) are sequentially distorted subsets of a group \( G \), then \[ A := A_1A_2 \cdots A_n = \{ a_1a_2\cdots a_n : a_i \in A_i \} \] is sequentially distorted in \( G \).
		Moreover, if each of the \( A_j \) is strongly distorted, then \( A \) is strongly distorted.
	\end{lemma}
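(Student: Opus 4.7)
The plan is to decompose elements of the product coordinate-by-coordinate and apply the distortion hypotheses to each factor separately. To avoid notational collision with the number of factors, I will use $m$ for the sequence index throughout.

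First, I would use the sequential distortion of each $A_j$ to fix, once and for all, sequences $\{w_m^{(j)}\}_{m \in \bn} \subset \bn$ witnessing sequential distortion of $A_j$. Then I would \emph{define} the witness sequence for $A$ in advance by
\[
	w_m = \sum_{j=1}^n w_m^{(j)}.
\]
The crucial point is that the $\{w_m^{(j)}\}$ are, by hypothesis, chosen independently of any particular sequence in $A_j$, so $\{w_m\}$ is likewise independent of any sequence in $A$.

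Next, given an arbitrary sequence $\{a_m\}_{m\in\bn} \subset A$, for each $m$ I would pick a decomposition $a_m = a_m^{(1)} a_m^{(2)} \cdots a_m^{(n)}$ with $a_m^{(j)} \in A_j$. Applying the sequential distortion of $A_j$ to the coordinate sequence $\{a_m^{(j)}\}_{m\in\bn}$, I obtain a finite set $S_j \subset G$ with $a_m^{(j)} \in S_j^{w_m^{(j)}}$ for every $m$. Letting $S = S_1 \cup \cdots \cup S_n$, which is finite, I get
\[
	a_m \;=\; a_m^{(1)} \cdots a_m^{(n)} \;\in\; S_1^{w_m^{(1)}} \cdots S_n^{w_m^{(n)}} \;\subset\; S^{w_m^{(1)} + \cdots + w_m^{(n)}} \;=\; S^{w_m},
\]
proving that $A$ is sequentially distorted.

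For the ``moreover'' clause, suppose each $A_j$ is strongly distorted with uniform cardinality constant $m_j$. Then each $S_j$ produced above has $|S_j| = m_j$, so $|S| \leq m_1 + \cdots + m_n$, which is a uniform bound independent of the chosen sequence. Taking $m = m_1 + \cdots + m_n$ and (if necessary to match the ``exactly $m$'' convention) enlarging $S$ by arbitrary elements of $G$, I obtain the required uniform constant. There is no genuine obstacle here; the only thing to watch is that the order of quantifiers in the definition of sequential distortion forces $\{w_m\}$ to be declared before the sequence $\{a_m\}$, which is why it was important to fix the $\{w_m^{(j)}\}$ up front and sum them, rather than building $\{w_m\}$ after seeing $\{a_m\}$.
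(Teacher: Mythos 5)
Your proof is correct and follows essentially the same route as the paper: decompose each term of the sequence coordinate-wise, apply the distortion hypothesis to each coordinate sequence, take the union of the resulting finite sets, and sum the word lengths and cardinalities. Your version is if anything more careful than the paper's about the quantifier order (fixing the witness sequences $w_m^{(j)}$ before the sequence in $A$ is given), which is exactly the point the paper compresses into the phrase ``the $w_{j,k}$ are independent of the initial sequence.''
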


	\begin{proof}
		Let \( \{a_k\}_{k\in\bn} \) be a sequence in \( A \).
		Then there exists \( a_{j,k} \in A_j \) such that \( a_k = a_{1,k}\cdots a_{n,k} \).
		Each of the \( a_{j,k} \) can be written as a word of length \( w_{j,k} \) in a set of cardinality of \( m_k \), where the \( w_{j,k} \) are independent of the initial sequence. 
		Therefore, \( a_n \) can be written as a word of length \( w_n = w_{1,k} + \cdots + w_{n,k} \) in a set of cardinality \( m_1+\cdots+m_k \), implying that \( A \) is sequentially distorted. 
		Moreover, if each of the \( A_j \) were strongly distorted, then the \( m_j \) can be chosen independent of the initial sequence, allowing us to conclude that \( A \) is strongly distorted. 
	\end{proof}


\section{Homeomorphism groups of closed manifolds}

	In this section we prove \Cref{thm:closed manifold}, showing that \( \Homeo_0(M) \) is SB-generated when \( M \) is a closed manifold. 
	Recall that \( \Homeo(M) \) is the group of homeomorphisms \( M \to M \) and is a topological group when equipped with the compact-open topology.
	We let \( \Homeo_0(M) \) denote the connected component of the identity. 

	The idea of the proof is to reduce---using the fragmentation lemma (\Cref{thm:fragmentation}) and \Cref{lem:product of distorted is distorted}---to showing that the subgroup of homeomorphisms supported in a standard open ball is strongly bounded.
	To do so, we show that the subgroup is strongly distorted using a standard technique  from the literature (\Cref{lem:construction}) and then apply \Cref{lem:distorted implies bounded}.

	Recall that the \emph{support} of a homeomorphism \( f\co M \to M \), denoted \( \supp(f) \), is the closure of the set \( \{ x \in M : f(x) \neq x \} \). 
	A subset \( B \) of an \( n \)-manifold is a \emph{standard open ball} if it is an open subset whose closure is homeomorphic to the closed unit ball in \( \br^n \) and whose boundary is a locally flat \( (n-1) \)-sphere. 
	Given two elements \( g \) and \( h \) in a group, we set \( g^h := h^{-1}gh \).

	We first establish that the subgroup of homeomorphisms supported in a given standard open ball is strongly distorted. 
	The proof relies on the following lemma.

	\begin{lemma}[{\cite[Construction~2.3]{LeRouxStrong}}]
		\label{lem:construction}
		Let \( X \) be a metric space. 
		Suppose \( Z \) is a subset of \( X \) such that there exist homeomorphisms \( \sigma, \tau : X \to X \) satisfying 
			\begin{enumerate}
				\item \( \sigma^n(Z) \cap \sigma^m(Z) = \varnothing \) and \( \tau^n(\supp(\sigma)) \cap \tau^m(\supp(\sigma)) = \varnothing \) for distinct \( n, m \in \bn \cup\{0\} \),
				\item if \( Y \) is a connected component of \( Z \), then the diameter of \( \sigma^n(Y) \) converges to 0 as \( n \to \infty \), and 				
				\item if \( Y \) is a connected component of \( \supp(\sigma) \), then the diameter of \( \tau^n(Y) \) converges to 0 as \( n \to \infty \). 
			\end{enumerate}
		If \( \{g_n\}_{n\in\bn} \) is a sequence of homeomorphisms supported in \( Z \), then 
			\[
				\gamma := \prod_{n,m \geq 0} g_n^{\tau^{-n}\sigma^{-m}}
			\]
		is a homeomorphism \( X \to X \) and 
			\[
				g_n = [\gamma^{\tau^{n}}, \sigma].
			\]
		\qed
	\end{lemma}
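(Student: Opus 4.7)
The plan is to first verify that the doubly-indexed product defining $\gamma$ converges to a genuine homeomorphism, and then deduce the commutator identity from a decomposition of $\gamma^{\tau^n}$.

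I would first introduce the auxiliary product $\gamma_n := \prod_{m \geq 0} g_n^{\sigma^{-m}}$ for each $n \geq 0$. Each factor $g_n^{\sigma^{-m}} = \sigma^m g_n \sigma^{-m}$ is supported in $\sigma^m(\supp(g_n)) \subset \sigma^m(Z)$, and hypothesis (1) makes these supports pairwise disjoint, so the factors commute and the formal product yields a well-defined bijection of $X$. Promoting this bijection to a homeomorphism uses hypothesis (2): the diameters of components $\sigma^m(Y)$ tend to $0$, so only finitely many factors appreciably move a sufficiently small neighborhood of any given point, giving continuity by a standard $\varepsilon/N$ argument. Observe that $\bigcup_m \sigma^m(Z) \subset \supp(\sigma)$, since $\sigma$ shifts $\sigma^m(Z)$ onto the disjoint set $\sigma^{m+1}(Z)$, so each $\gamma_n$ is supported in $\supp(\sigma)$. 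The full product $\gamma := \prod_{n \geq 0} \gamma_n^{\tau^{-n}}$ is then constructed by the same argument, using hypothesis (1) to ensure the supports $\tau^n(\supp(\sigma))$ are pairwise disjoint and hypothesis (3) to shrink their diameters.

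For the commutator identity, first verify it at the level of $\gamma_n$: conjugation by $\sigma$ shifts indices, giving $\sigma\gamma_n\sigma^{-1} = \prod_{m \geq 1} g_n^{\sigma^{-m}}$, and since the factors of $\gamma_n$ commute we obtain $\gamma_n = g_n \cdot (\sigma\gamma_n\sigma^{-1})$, whence $g_n = [\gamma_n, \sigma]$ under the convention $[a,b] = aba^{-1}b^{-1}$. For the full $\gamma$, expand
\[
    \gamma^{\tau^n} = \tau^{-n}\gamma\tau^n = \prod_{k \geq 0} \gamma_k^{\tau^{n-k}}.
\]
The $k=n$ factor is exactly $\gamma_n$; each $k \neq n$ factor is supported in $\tau^{k-n}(\supp(\sigma))$, disjoint from $\supp(\sigma)$ by hypothesis (1). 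Setting $A := \prod_{k \neq n} \gamma_k^{\tau^{n-k}}$, the map $A$ commutes with both $\sigma$ and $\gamma_n$. Writing $\gamma^{\tau^n} = \gamma_n A$ and expanding the commutator, the $A$'s cancel and $[\gamma^{\tau^n},\sigma] = [\gamma_n,\sigma] = g_n$.

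The main obstacle is the topological convergence step: verifying that the infinite product is genuinely continuous at accumulation points of the support structure, not merely a set-theoretic bijection. The shrinking-diameter hypotheses (2) and (3) are calibrated precisely for this, but implementing them requires care with the iterated conjugates. By contrast, once $\gamma$ is in hand, the commutator identity is a formal consequence of the disjointness condition (1).
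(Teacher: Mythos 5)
Your argument is essentially correct, but be aware that the paper does not prove this lemma at all: it is imported verbatim from Le~Roux--Mann \cite{LeRouxStrong} with an immediate \(\square\), so there is no in-paper proof to compare against. What you have written is the standard argument behind that construction, and the skeleton is right: build \( \gamma_n = \prod_{m\geq 0} g_n^{\sigma^{-m}} \), check \( \supp(\gamma_n)\subset\supp(\sigma) \), conjugate by powers of \( \tau \) to push the \( \gamma_n \) into the pairwise disjoint sets \( \tau^n(\supp(\sigma)) \), and extract \( g_n \) by the commutator trick. Your observation that \( \bigcup_m\sigma^m(Z)\subset\supp(\sigma) \) and your disjoint-support commutation argument for \( A=\prod_{k\neq n}\gamma_k^{\tau^{n-k}} \) are both correct; for the latter you should note that disjointness of \( \tau^{j}(\supp\sigma) \) from \( \supp\sigma \) for \emph{negative} \( j \) also follows from hypothesis (1) by applying \( \tau^{-j} \) to the stated identity.

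Two points deserve attention. First, with the paper's convention \( g^h=h^{-1}gh \), your product is \( \prod_{n,m}g_n^{\sigma^{-m}\tau^{-n}} \), i.e.\ the factors are \( \tau^n\sigma^m g_n\sigma^{-m}\tau^{-n} \), whereas the displayed formula reads \( g_n^{\tau^{-n}\sigma^{-m}} \), whose factors \( \sigma^m\tau^n g_n\tau^{-n}\sigma^{-m} \) are supported in \( \sigma^m\tau^n(Z) \) and do not interact with either disjointness hypothesis. Your version is the one for which \( \gamma^{\tau^n} \) telescopes, so you have silently corrected the order of the conjugators (equivalently, the displayed formula presumes the opposite exponent convention); this should be said explicitly. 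Likewise state your commutator convention \( [a,b]=aba^{-1}b^{-1} \), since with \( [a,b]=a^{-1}b^{-1}ab \) the same computation yields \( g_n^{\sigma} \) rather than \( g_n \). Second, the continuity of the infinite product is the only genuinely analytic step and you leave it as a ``standard \( \varepsilon/N \) argument''; to make it honest you need the estimate \( d(y,\gamma_n(y))\leq\mathrm{diam}(\sigma^m(Y)) \) for \( y\in\sigma^m(Y) \), together with an argument at points accumulated by infinitely many of the sets \( \sigma^m(Z) \), where hypothesis (2) gives decay only per component \( Y \) and some uniformity over components (or a reduction to finitely many components, which suffices for every application in this paper) must be invoked. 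Neither issue is fatal, but both need to be written out if this proof is to replace the citation.
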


	\begin{proposition}
		\label{prop:construction}
		If \( X \), \( Z \), \( \sigma \), and \( \tau \) are as in \Cref{lem:construction}, then the subgroup \( A \) of \( \Homeo(X) \) consisting of homeomorphisms  supported in \( Z \) is strongly distorted in any closed subgroup of \( \Homeo(X) \) containing \( A \), \( \sigma \), and \( \tau \). 
	\end{proposition}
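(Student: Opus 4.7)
The plan is to feed an arbitrary sequence $\{a_n\}_{n\in\bn} \subset A$ of homeomorphisms supported in $Z$ directly into \Cref{lem:construction}, which produces a homeomorphism $\gamma \in \Homeo(X)$ satisfying $a_n = [\gamma^{\tau^n}, \sigma]$ for every $n \in \bn$. Expanding the commutator,
\[
	a_n = \tau^{-n}\gamma^{-1}\tau^n \sigma^{-1} \tau^{-n}\gamma\tau^n \sigma,
\]
exhibits $a_n$ as a word of length $w_n := 4n + 4$ in the set $S := \{\sigma^{\pm 1}, \tau^{\pm 1}, \gamma^{\pm 1}\}$ of cardinality $m := 6$. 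Since the constant $m$ does not depend on the chosen sequence and the sequence $\{w_n\}$ depends only on $n$, this is exactly the definition of strong distortion.

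For this to witness strong distortion of $A$ inside a given closed subgroup $G \leq \Homeo(X)$ containing $A$, $\sigma$, and $\tau$, I need $S \subset G$. The elements $\sigma^{\pm 1}$ and $\tau^{\pm 1}$ lie in $G$ by hypothesis, so the only point to verify is that $\gamma \in G$. The finite partial products defining $\gamma$ in \Cref{lem:construction} are compositions of conjugates of the $a_n$'s by words in $\sigma$ and $\tau$, and therefore lie in $G$. Since these partial products converge to $\gamma$ in $\Homeo(X)$ (this convergence is part of the content of \Cref{lem:construction}) and $G$ is closed in $\Homeo(X)$, we conclude $\gamma \in G$.

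The main---indeed essentially the only---substantive step is this last closure argument; the combinatorics of the commutator identity from \Cref{lem:construction} does all the work of producing the uniform cardinality bound $m$ and the uniform word-length sequence $\{w_n\}$. Thus the hypotheses that $G$ be closed and contain $A$, $\sigma$, and $\tau$ are exactly what is needed to promote the strong distortion of $A$ from $\Homeo(X)$ to an arbitrary such $G$.
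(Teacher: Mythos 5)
Your proof is correct and follows essentially the same route as the paper's: apply \Cref{lem:construction} to the given sequence, note that \( \gamma \) lies in the closed subgroup as a limit of partial products built from \( A \), \( \sigma \), and \( \tau \), and read off the uniform word length \( 4n+4 \) in the six-element set \( \{\gamma^{\pm1},\sigma^{\pm1},\tau^{\pm1}\} \). No discrepancies.
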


	\begin{proof}
		Let \( H \) be a closed subgroup of \( \Homeo(X) \) containing \( A \), \( \sigma \), and \( \tau \).
		Fix a sequence \( \{a_n\}_{n\in\bn} \) of \( A \). 
		Then \Cref{lem:construction} yields a homeomorphism \( \gamma \) such that \( a_n = [\gamma^{\tau^{n}}, \sigma] \); moreover, \( \gamma \in H \), as \( H \) is closed and \( \gamma \) is a limit of elements in \( H \). 
		We have therefore expressed \( a_n \) as a word of length \( 4n+4 \) in the set \( \{ \gamma^\pm, \tau^\pm, \sigma^\pm \} \).
		As the word length of \( a_n \) and the cardinality of the generating set were independent of the sequence, \( A \) is strongly distorted in \( H \).
	\end{proof}

	We now argue that by letting \( Z \) be a standard open ball in a manifold \( M \), we can construct \( \sigma \) and \( \tau \) satisfying the hypotheses of \Cref{lem:construction}. 
	Let \( {\mathbb B}^n \) denote the open unit ball in \( \br^n \).

	\begin{figure}[t]
		\centering
		\includegraphics{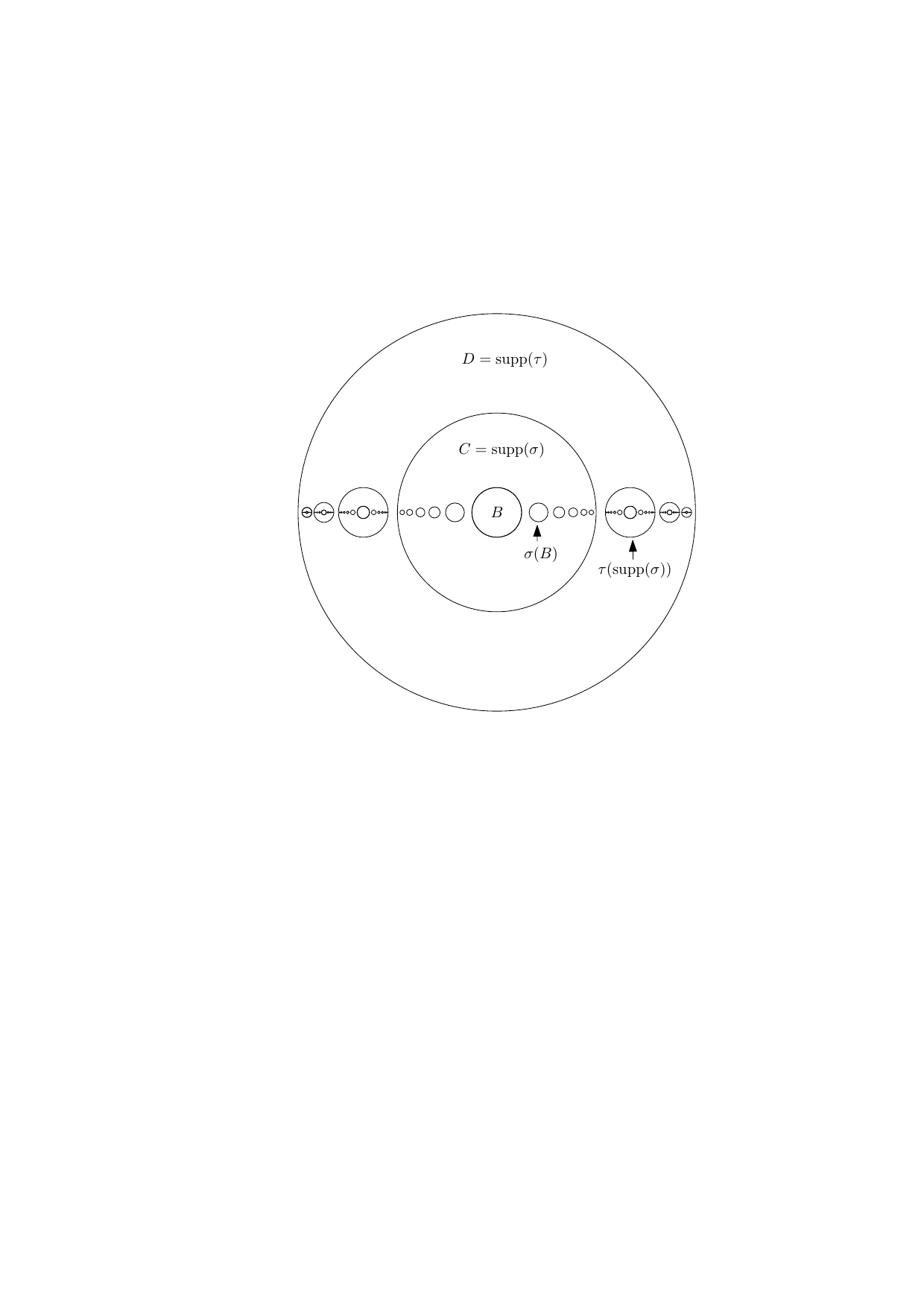}
		\caption{The homeomorphisms constructed in \Cref{lem:ball-distortion}.}
		\label{fig:manifolds}
	\end{figure}

	\begin{lemma}
		\label{lem:ball-distortion}
		If \( B \) is a standard open ball in \( M \), then the subgroup of \( \Homeo_0(M) \) consisting of homeomorphisms supported in \( B \) is strongly distorted in \( \Homeo_0(M) \). 
	\end{lemma}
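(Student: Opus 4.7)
The plan is to apply \Cref{prop:construction} with $X = M$, $Z = B$, and ambient closed subgroup $H = \Homeo_0(M)$. Since $\Homeo_0(M)$ is the identity component of a topological group, it is automatically closed in $\Homeo(M)$, and every homeomorphism of $M$ supported in $B$ lies in $\Homeo_0(M)$ because such a homeomorphism is isotopic to the identity through homeomorphisms supported in $B$. Fix a metric on $M$ compatible with its manifold topology; it then suffices to produce $\sigma, \tau \in \Homeo_0(M)$ satisfying conditions (1)--(3) of \Cref{lem:construction}.

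To construct $\sigma$, I would first choose a standard open ball $B' \subset M$ with $\bar B \subset B'$ together with a point $p \in B' \setminus \bar B$. Working in a chart identifying $B'$ with $\br^n$, I would arrange a sequence of pairwise disjoint standard open balls $B = B_0, B_1, B_2, \ldots \subset B'$ whose diameters tend to $0$ and which accumulate only at $p$. I would then build $\sigma \in \Homeo(M)$ supported in $B'$ with $\sigma(p) = p$ and $\sigma(B_n) = B_{n+1}$ for all $n \geq 0$, concretely by piecing together homeomorphisms carrying one nested ball to the next and interpolating to the identity in the complement. This $\sigma$ automatically lies in $\Homeo_0(M)$; the forward iterates $\sigma^n(B) = B_n$ are pairwise disjoint (giving condition (1) for $\sigma$), and their diameters tend to $0$ (giving condition (2), since each $B_n$ is connected).

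To construct $\tau$, I would iterate the procedure one scale up: enlarge $B'$ to a standard open ball $B''$ with $\bar{B'} \subset B''$, choose a standard open ball $B^\sharp$ with $\overline{\supp(\sigma)} \subset B^\sharp \subset B''$, and apply the previous step to the pair $(B^\sharp, B'')$ in place of $(B, B')$. The result is an element $\tau \in \Homeo_0(M)$ supported in $B''$ whose forward iterates of $B^\sharp$---and hence of $\supp(\sigma) \subset B^\sharp$---are pairwise disjoint with diameters shrinking to zero, verifying condition (1) for $\tau$ and condition (3). \Cref{prop:construction} then concludes that the subgroup of $\Homeo_0(M)$ consisting of homeomorphisms supported in $B$ is strongly distorted in $\Homeo_0(M)$.

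The main obstacle I expect is the explicit construction of the ``nested shift'' $\sigma$: producing an honest homeomorphism of $M$ that fixes the accumulation point $p$, shifts the disjoint family $\{B_n\}$ one step inward, and is supported in $B'$. The figure of nested disks referenced in the statement suggests that this is carried out concretely in a Euclidean chart on $B'$, and the argument should be purely local within $B'$, so it transfers verbatim from the model case of $\br^n$.
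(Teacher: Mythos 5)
Your proposal is correct and follows essentially the same route as the paper: reduce to \Cref{prop:construction} and build $\sigma$ and $\tau$ as infinite shifts supported in successively larger standard balls, with the images of $B$ (resp.\ $\supp(\sigma)$) accumulating at a point so their diameters shrink. The paper fills in the one step you flag as the main obstacle by writing $\sigma$ explicitly as a tapered translation on a chart $\mathbb B^{n-1}\times\br$ of the larger ball, which is exactly the local Euclidean construction you anticipate.
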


	\begin{proof}
		By \Cref{prop:construction}, it is enough to find \( \sigma, \tau \in \Homeo_0(M) \) satisfying: 
		\begin{enumerate}[(i)]
			\item \( \sigma^n(B) \cap \sigma^m(B) = \varnothing \) for distinct \( n,m \in \mathbb Z \), 
			\item the diameter of \( \sigma^n(B) \) tends to 0 as \( n \) tends to \( \pm \infty \),
			\item \( \tau^n(\supp(\sigma)) \cap \tau^m(\supp(\sigma)) = \varnothing \) for distinct \( n, m\in \mathbb Z \), and
			\item the diameter of \( \tau^n(\supp(\sigma)) \) tends to 0 as \( n \) tends to \( \pm \infty \).
		\end{enumerate}

		As \( B \) is a standard open ball, we can choose another standard open ball \( C \) containing the closure of \( B \).
		Fix a homeomorphism \( \varphi \co C \to \mathbb B^{n-1} \times \mathbb R \)		sending \( B \) to the open ball of radius 1/2 in \( \mathbb B^{n-1} \times \mathbb R \) (viewing this as a subspace of \( \mathbb R^n) \). 
		Let \( \bar \sigma\co \mathbb B^{n-1} \times \mathbb R \to \mathbb B^{n-1} \times \mathbb R \) be defined by
			\[
				\bar\sigma( \mathbf x, t ) = \left\{
					\begin{array}{ll}
						(\mathbf x, t+2) & |\mathbf x| \leq \frac12 \\
						(\mathbf x, t+ 2(1-|\mathbf x|)) & |\mathbf x| > \frac12
					\end{array}\right.
			\]
		Define \( \sigma = \vp^{-1}\circ \bar \sigma \circ \vp \).
		Observe that \( \sigma \) extends continuously to the boundary of \( C \), where it is the identity.
		We can therefore extend \( \sigma \) to all of \( M \) by the identity.
		Observe that \( \sigma \) satisfies conditions (i) and (ii).
		As \( C \) is a standard open ball, we can run the same argument above with \( B \) replaced by \( C \) to get a homeomorphism \( \tau \) satisfying conditions (iii) and (iv), as \( \sigma \) is supported in the closure of \( C \). 
		The homeomorphisms \( \sigma \) and \( \tau \) are visualized in \Cref{fig:manifolds}.
	\end{proof}
	Let us recall the fragmentation lemma, and then we can show that \( \Homeo_0(M) \) is SB-generated. 

	\begin{theorem}[Fragmentation Lemma]\label{thm:fragmentation}
		Let \( M \) be a closed manifold.
		If \( B_1, \ldots, B_n \) is an open covering of \( M \), then there exists an open neighborhood \( U \) of the identity in \( \Homeo_0(M) \) such that for each \( g \in U \) there exists \( g_1, \ldots, g_n \in \Homeo_0(M) \) satisfying \( g= g_1 \circ \cdots \circ g_n \) and \( \supp(g_i) \subset B_i \).
		\qed
	\end{theorem}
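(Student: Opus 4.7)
The plan is to induct on $n$, with essentially all of the technical content concentrated in the case $n = 2$.

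The base case $n = 1$ is trivial: one has $B_1 = M$ and may take $U = \Homeo_0(M)$ with $g_1 = g$. For $n = 2$, the approach is to choose compact sets $K_i \subset B_i$ with $K_1 \cup K_2 = M$, so that $M \setminus K_1 \subset B_2$. I would aim to produce an open neighborhood $U$ of the identity such that every $g \in U$ admits some $g_1 \in \Homeo_0(M)$ with $\supp(g_1) \subset B_1$ that agrees with $g$ on an open neighborhood of $K_1$; then $g_2 := g_1^{-1} g$ is the identity on that neighborhood and hence supported in $M \setminus K_1 \subset B_2$, yielding $g = g_1 \circ g_2$.

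The heart of the matter is the existence of such a $g_1$. It amounts to showing that the restriction map from $\{h \in \Homeo_0(M) : \supp(h) \subset B_1\}$ to the space of embeddings of a neighborhood of $K_1$ into $B_1$ admits a continuous local section at the inclusion. This is precisely the content of the Edwards--Kirby local contractibility theorem for homeomorphism groups of topological manifolds (together with its attendant isotopy extension principle); its proof rests on the torus trick, and in dimension four requires additional input from Quinn. This is where I expect the main obstacle to lie: short of invoking this machinery, there is no elementary construction of $g_1$ that works for arbitrary closed topological manifolds.

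For the inductive step $n \geq 3$, apply the $n = 2$ case to the cover $\{B_1, V\}$ where $V = B_2 \cup \cdots \cup B_n$, obtaining $g = g_1 \circ h$ with $\supp(g_1) \subset B_1$ and $\supp(h) \subset V$. Now $h$ is supported in the open cover $\{B_2, \ldots, B_n\}$ of $V$; after adjoining an open set disjoint from $\supp(h)$ to recover an open cover of $M$, a relativized version of the $n = 2$ construction applied to $h$ fragments it as $h = g_2 \circ \cdots \circ g_n$ with $\supp(g_i) \subset B_i$. One must shrink the neighborhood $U$ to ensure that the required closeness to the identity propagates through each inductive step, but this bookkeeping is routine; all of the genuine difficulty is packaged into the base case.
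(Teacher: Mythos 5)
The paper does not actually prove this statement: it records the Fragmentation Lemma as a known result (stated with a \qed and no argument), since it is the classical consequence of the Edwards--Kirby deformation theorem that Mann--Rosendal also quote. Your outline is, in substance, the standard proof of that classical result, and you correctly locate all of the real content in the Edwards--Kirby local contractibility/deformation machinery (torus trick, plus Quinn's input in dimension four); the $n=2$ reduction via compact sets $K_i\subset B_i$ with $K_1\cup K_2=M$ and the factorization $g=g_1\circ(g_1^{-1}g)$ is exactly right. Two points deserve more care than you give them. First, in the inductive step, adjoining the open set $M\smallsetminus\supp(h)$ to $\{B_2,\ldots,B_n\}$ and invoking the inductive hypothesis verbatim would produce an unwanted extra factor supported in $M\smallsetminus\supp(h)$, which need not lie in any $B_i$; the correct fix is the one you mention only in passing, namely the \emph{relative} form of the deformation theorem (deform an embedding to the inclusion rel the set where it already is the inclusion), applied to peel off $g_i$ agreeing with the remaining homeomorphism near $K_i$ while fixing a neighborhood of $K_1\cup\cdots\cup K_{i-1}$. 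That relative statement should be the named input, not an afterthought. Second, the bookkeeping that $h=g_1^{-1}g$ lands in the smaller neighborhood required by the inductive hypothesis is only ``routine'' because Edwards--Kirby give a \emph{continuous} local section, so that $g_1\to\mathrm{id}$ as $g\to\mathrm{id}$; it is worth saying that continuity is what makes the shrinking of $U$ possible. With those caveats your sketch is a faithful reconstruction of the literature proof the paper is implicitly citing.
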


	\begin{theorem}
		\label{thm:closed manifold}
		If \( M \) is a closed manifold, then \( \Homeo_0(M) \) is SB-generated.
	\end{theorem}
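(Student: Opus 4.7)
The plan is to assemble the pieces already developed in the excerpt: the fragmentation lemma (\Cref{thm:fragmentation}), the localized strong distortion provided by \Cref{lem:ball-distortion}, and the facts about strongly distorted subsets from \Cref{section:distortion}. Because $M$ is a closed manifold, it is compact, so I can cover it by finitely many standard open balls $B_1, \ldots, B_n$. Let $A_i \subset \Homeo_0(M)$ denote the subgroup of homeomorphisms supported in $B_i$.

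First I would invoke \Cref{lem:ball-distortion} to conclude that each $A_i$ is strongly distorted in $\Homeo_0(M)$. Then, by \Cref{lem:product of distorted is distorted}, the product
\[
A := A_1 A_2 \cdots A_n = \{ a_1 a_2 \cdots a_n : a_i \in A_i \}
\]
is strongly distorted in $\Homeo_0(M)$, and hence, by \Cref{lem:distorted implies bounded}, strongly bounded.

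Next I would use the fragmentation lemma: applied to the open cover $\{B_1,\ldots,B_n\}$, it yields an open neighborhood $U$ of the identity in $\Homeo_0(M)$ such that every $g \in U$ factors as $g = g_1 \circ \cdots \circ g_n$ with $\supp(g_i) \subset B_i$. In other words, $U \subset A$. Since $\Homeo_0(M)$ is connected, any open neighborhood of the identity generates it; thus $U$ generates $\Homeo_0(M)$, and therefore so does $A$. Combined with the fact that $A$ is strongly bounded, this shows $\Homeo_0(M)$ is SB-generated.

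No step is a serious obstacle — all the substantive work has been packaged into \Cref{lem:ball-distortion} (where the Calegari--Freedman / Le~Roux-type construction of \Cref{lem:construction} is applied to a ball) and into the fragmentation lemma. The only point requiring a moment of care is the passage from ``$U$ generates'' to ``$A$ generates'', which uses connectedness of $\Homeo_0(M)$; this is the standard topological-group fact that a connected group is generated by any identity neighborhood.
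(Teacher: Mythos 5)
Your proposal is correct and follows essentially the same route as the paper: cover $M$ by standard open balls, apply \Cref{lem:ball-distortion} to each, combine via \Cref{lem:product of distorted is distorted} and \Cref{lem:distorted implies bounded}, and use the fragmentation lemma plus connectedness to see the resulting set generates. The only cosmetic difference is that you take the product $A_1\cdots A_n$ as the strongly bounded generating set, whereas the paper uses the fragmentation neighborhood $U$ itself (which is strongly distorted as a subset of that product); both are valid.
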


	\begin{proof}
		Fix an open covering of \( M \) by standard open balls \( B_1, \ldots, B_n \), and let \( U \) the associated open neighborhood of the identity given by the Fragmentation Lemma. 
		First note that as \( \Homeo_0(M) \) is connected, \( U \) generates \( \Homeo_0(M) \). 
		It is left to show that \( U \) is strongly bounded in \( \Homeo_0(M) \), which we accomplish by showing that it is strongly distorted. 
		
		Let \( H_i \) be the subgroup of \( \Homeo_0(M) \) consisting of homeomorphisms supported in \( B_i \). 
		Then, by \Cref{lem:ball-distortion}, \( H_i \) is strongly distorted in \( \Homeo_0(M) \). 
		\Cref{lem:product of distorted is distorted} implies that \( H_1H_2\cdots H_n \) is strongly distorted in \( \Homeo_0(M) \), and hence, as \( U \subset H_1\cdots H_n \), it is also strongly distorted. 
		Therefore, by \Cref{lem:distorted implies bounded}, \( \Homeo_0(M) \) is SB-generated.
	\end{proof}

\section{Big mapping class groups}
\label{sec:mcg}

The \emph{mapping class group} of a surface \( M \) is the group of isotopy classes of homeomorphisms \( M \to M \). 
A borderless surface is of \emph{finite type} if it is homeomorphic to the interior of a compact surface; otherwise, it is of \emph{infinite type}.
Mapping class groups of infinite-type surfaces are referred to as \emph{big mapping class groups} (see \cite{AramayonaBiga} for an introductory survey).
The goal is to exhibit a family of big mapping class groups that are SB-generated.
To do so, we need to introduce the notion of a telescoping surface.

\begin{Def}
	A surface \( M \) is \emph{telescoping} if it admits two separating simple closed curves \( a \) and \( b \) and a topological end \( \mu \) such that the following hold:
	\begin{enumerate}[(i)]
		\item \( a \) separates \( b \) from \( \mu \) (i.e., any proper ray with base point on \( b \) and exiting the end \( \mu \) must intersect \( a \)),
		\item given any separating simple closed curve \( c \) separating \( a \) from \( \mu \), there exists a homeomorphism that fixes \( b \) and maps \( a \) onto \( c \),
		\item if \( T \) is the component of \( M \ssm a \) disjoint from \( b \), then there exists a homeomorphism \( M \to M \) mapping \( M \ssm T \) into \( T \). 
	\end{enumerate}
	The set \( T \) is called a \emph{maximal telescope} and the end \( \mu \) is called a \emph{maximal end}.
\end{Def}

The definition of telescoping above is given in \cite{VlamisHomeomorphism}; it is an expansion of the original definition given by Mann--Rafi in \cite{MannLargescalea}.
The prototypes for telescoping surfaces are the 2-sphere, the plane, and the open annulus (the definition, as given above, does not include the 2-sphere, but we should view the 2-sphere as a telescoping surface).
The next simplest example is obtained by removing a Cantor set from the 2-sphere; in this case, each end is maximal. 

Let us describe two other examples. 
Let \( X_1 \) be the Cantor set, and let \( X_2 \) be a compact zero-dimensional Hausdorff space that can be written as \( X_2 = P \cup D \), where \( P \) is a non-trivial perfect set and \( D \) is a discrete set accumulating onto \( P \) (i.e., \( \overline D \ssm D = P \)). 
Fix an accumulation point \( x_i \) in \( X_i \). 
Let \( Z = X_1 \sqcup X_2/ \sim \), where \( \sim \) is the equivalence relation generated by declaring \( x_1 \sim x_2 \). 
The surface \( \Sigma \) resulting from removing an embedded copy of \( Z \) from the 2-sphere is telescoping, and it has a unique maximal end corresponding to the equivalence class of \( x_1 \) (or equivalently, \( x_2 \)). 
Moreover, \( \Sigma \# \Sigma \) is also a telescoping surface, but this time with two maximal ends\footnote{The surface \( \Sigma \) is not considered telescoping in the Mann--Rafi definition, whereas \( \Sigma\#\Sigma \) is.}. 

The set of maximal ends of a telescoping surface \( M \) consists of a single \( \Homeo(M) \)-orbit and is either a singleton, a doubleton, or a perfect set (see \cite{VlamisHomeomorphism} for details). 
A telescoping surface with a perfect set of maximal ends is referred to as a \emph{perfectly self-similar surface}, and we call those with a unique maximal end \emph{uniquely telescoping}.

%

Finally, recall from the introduction that given a subsurface \( \Sigma \) of a surface \( S \), we let \( U_\Sigma \) denote the subset of \( \mcg(S) \) consisting of mapping classes that admit a representative homeomorphism that restricts to the identity on the complement of \( \Sigma \).

\begin{theorem} \label{thm:telescoping manifolds}
	Let \( S \) be a surface that can be expressed as the connected sum of a finite-type borderless surface and finitely many telescoping surfaces \( M_1, \ldots, M_n \).
	If, for each \( M_i \) that is uniquely telescoping, there exists \( j \neq i \) with \( M_j \) homeomorphic to \( M_i \), then there exists a finite-type subsurface \( \Sigma \) of \( S \) such that \( U_\Sigma \) is strongly distorted in \( \mcg(S) \).
\end{theorem}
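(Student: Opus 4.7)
I would start by choosing a finite-type subsurface \( \Sigma \subset S \) containing the finite-type summand together with a collar of each \( M_i \) bounded by a single telescoping curve (in the uniquely telescoping and perfectly self-similar cases) or by a pair of such curves (in the doubleton case). The components of \( S\ssm\Sigma \) are then maximal telescopes of the \( M_i \); label them \( E_1,\ldots,E_k \). Because the \( E_j \) are pairwise disjoint, every mapping class in \( U_\Sigma \) has a representative that splits as a disjoint union of homeomorphisms of the \( \overline{E_j} \), giving an internal direct product decomposition \( U_\Sigma = \prod_j H_j \), where \( H_j \) is the subgroup of \( \mcg(S) \) of mapping classes supported in \( \overline{E_j} \). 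By \Cref{lem:product of distorted is distorted}, it suffices to show strong distortion of each \( H_j \) separately---or, for components \( E_j,E_{j'} \) coming from a homeomorphic pair of uniquely telescoping summands \( M_i\cong M_{i'} \), of the joint subgroup \( H_j H_{j'} \). Since the image of a strongly distorted subset under a group homomorphism remains strongly distorted, it is enough to establish these statements inside \( \Homeo(S) \) and then pass to \( \mcg(S) \).

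\textbf{Shift constructions and \Cref{prop:construction}.} For each non-uniquely-telescoping summand I would produce \( \sigma, \tau \in \Homeo(S) \) verifying the hypotheses of \Cref{prop:construction} with \( Z = \overline{E_j} \), using the internal self-similarity of the telescope. In the perfectly self-similar case, the perfect set of maximal ends inside \( E_j \) supplies two disjoint nested sequences of telescopic subcopies, one on which to run the \( \sigma \)-shift and another, disjoint from \( \supp(\sigma) \), on which to run the \( \tau \)-shift. The doubleton case is analogous, the two maximal ends of \( E_j \) serving as targets of the two shifts. For a uniquely telescoping pair \( M_i \cong M_{i'} \) with components \( E_j,E_{j'} \), I would take \( Z = \overline{E_j}\sqcup\overline{E_{j'}} \); define \( \sigma \) to be the simultaneous telescoping shift on both \( E_j \) and \( E_{j'} \); and define \( \tau \) as a \emph{swap-and-shift} that, using a fixed homeomorphism \( M_i \cong M_{i'} \) to identify nested telescopic shells, sends the \( n \)th shell of \( E_j \) onto the \( n \)th shell of \( E_{j'} \) and sends the \( n \)th shell of \( E_{j'} \) onto the \( (n+1) \)st shell of \( E_j \). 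Iterates of \( \tau \) then alternate between the two maximal ends \( \mu_i,\mu_{i'} \) while strictly deepening into them, so the \( \tau^n(\supp(\sigma)) \) are pairwise disjoint and their diameters shrink to zero in any metric compatible with the topology on the finite-type region carrying the supports.

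\textbf{Main obstacle.} The hardest step is the uniquely telescoping pair: constructing the swap-and-shift \( \tau \) as a bona fide self-homeomorphism of \( S \) while simultaneously verifying the iterated disjointness and diameter-shrinking hypotheses of \Cref{lem:construction}. My approach is to fix, once and for all, compatible nested telescopic exhaustions of \( E_j \) and \( E_{j'} \) transferred through the homeomorphism \( M_i\cong M_{i'} \), to define \( \tau \) shell-by-shell on these exhaustions using homeomorphisms between matching shells, and then to extend \( \tau \) by the identity outside. The diameter-shrinking hypothesis should follow from the fact that the shells exhaust their respective telescopes, so any compact set is eventually displaced by \( \tau^n \) into an arbitrarily small neighborhood of one of the maximal ends. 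Once these shifts are in hand, \Cref{prop:construction} gives strong distortion of \( H_j H_{j'} \) in \( \Homeo(S) \), descent to \( \mcg(S) \) is automatic by the quotient remark above, and assembling the factors via \Cref{lem:product of distorted is distorted} yields strong distortion of \( U_\Sigma \) in \( \mcg(S) \).
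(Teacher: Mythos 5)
Your global architecture matches the paper's: choose \( \Sigma \) so that the complementary pieces are the tails \( T_i \) of the \( M_i \), write \( U_\Sigma \) as a finite product of subgroups supported in those pieces, and assemble with \Cref{lem:product of distorted is distorted}. Your perfectly self-similar case is also in the right spirit, though note that with \( Z = \overline{E_j} \) and \( \sigma \) supported inside \( E_j \) you get \( \sigma(Z) = Z \), so the disjoint copies \( \sigma^n(Z) \) must be created \emph{outside} the tail; the paper does this by using the homeomorphic partner \( T_j \) to build an ambient surface \( D \supset T_j \cup T_k \) and a copy \( C_0 \cong D \) properly containing \( T_k \). That is a repairable presentation issue, since after the reduction \( M_i \cong M_i \# M_i \) a partner is always available.

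The genuine gap is your treatment of a uniquely telescoping pair. \Cref{lem:construction} requires \( \sigma^n(Z) \cap \sigma^m(Z) = \varnothing \) for all distinct \( n,m \geq 0 \), i.e.\ infinitely many pairwise disjoint copies \( Z, \sigma(Z), \sigma^2(Z), \ldots \) inside \( S \). Your \( \sigma \) (a simultaneous telescoping shift) is supported in \( Z = \overline{E_j} \sqcup \overline{E_{j'}} \) and therefore satisfies \( \sigma(Z) = Z \); likewise the iterates \( \tau^n(\supp \sigma) \) under your swap-and-shift are nested rather than disjoint. Moreover no choice of \( \sigma \) can repair this: each disjoint copy of \( \overline{E_j} \) would have to contain an end of \( S \) locally modelled on the maximal end \( \mu_i \), and when the \( M_i \) are uniquely telescoping \( S \) has only finitely many such ends, so infinitely many disjoint copies of a full tail simply do not fit. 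Hence \Cref{prop:construction} is not applicable to \( Z = \overline{E_j}\sqcup\overline{E_{j'}} \), and your ``main obstacle'' paragraph is aimed at the wrong difficulty---the problem is not constructing \( \tau \) or checking diameters, but the disjointness of the \( \sigma \)-orbit of \( Z \). The paper avoids the issue by a different mechanism: using \cite[Proposition~6.18]{MannLargescalea} it conjugates \( V_k \) into the subgroup supported in a \emph{maximal telescope} of \( M_k \) and quotes the strong distortion of telescope subgroups inside their extensions from the proof of \cite[Theorem~5.2]{VlamisHomeomorphism}, an argument that shifts compact shells \emph{into} the single maximal end and reassembles them with a more delicate commutator trick, rather than displacing the whole tail; the two-maximal-end case is reduced to this by splitting \( M_i \cong M_i' \# M_i' \). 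To close the gap you would need to import or reprove that telescope result.
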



\begin{proof}
	Throughout the proof, we rely heavily on the results of \cite{MannLargescalea} and \cite{VlamisHomeomorphism}.
	If \( M_i \) is perfectly self-similar, then \( M_i \) is homeomorphic to \( M_i \# M_i \), and if \( M_i \) has exactly two maximal ends, then \( M_i \) is homeomorphic to \( M_i' \# M_i' \) with \( M_i' \) uniquely telescoping.
	We may therefore assume that none of the \( M_i \) have exactly two maximal ends and that for every \( i \) there exists \( j \neq i \) such that \( M_i \) and \( M_j \) are homeomorphic.  
	These assumptions simplify the notation below.  

	We can choose pairwise-disjoint subsurfaces \( T_1, \ldots, T_n \) such that the closure of the complement of \( T_1 \cup \cdots \cup T_n \) is a finite-type surface \( \Sigma \) with \( n \) boundary components and such that \( T_i \) is homeomorphic to \( M_i \) with an open disk removed.  
	Let \( U = U_\Sigma \).
	The rest of the proof is dedicated to showing that \( U \) is strongly distorted in \( \mcg(S) \). 

	For each \( g \in U \), there exists \( g_1, \ldots, g_n \in U \) such that \( g = g_1 \circ \cdots \circ g_n \) and such that \( g_k \) has a representative supported in \( T_k \). 
	Let \( V_k \) be the subset of \( \mcg(S) \) consisting of mapping classes with a representative supported in \( T_k \). 
	We claim that \( V_k \) is strongly distorted, and hence as \( U = V_1V_2 \cdots V_n \), \Cref{lem:product of distorted is distorted} implies that \( U \) is strongly distorted.
	
	Fix \( k \in \{1, \ldots, n \} \).
	There are two cases: either \( M_k \) is uniquely telescoping or perfectly self-similar. 
	First, let us assume \( M_k \) is uniquely telescoping.
	Fix an embedding \( \iota_k \co T_k \hookrightarrow M_k \) so that \( M_k \ssm \iota_k(T_k) \) is an open disk. 
	By \cite[Proposition~6.18]{MannLargescalea}, there is an element \( f \in A \) such that
	\begin{itemize}
		\item \( f(T_k) \subset T_k \),
		\item \( \iota_k(f(T_k)) \) is a maximal telescope in \( M_k \) (i.e., its boundary can be taken to be the curve \( a \) in the definition of telescoping), and
		\item \( \iota_k(T_k) \) is---in the language of \cite{VlamisHomeomorphism}---an \emph{extension} of the telescope \( \iota_k(f(T_k)) \) (i.e., its boundary can be taken to be the curve \( b \) in the definition of telescoping).
	\end{itemize}
	Let \( H_1 \) and \( H_2 \) be the subgroups of \( \mcg(M_k) \) consisting of mapping classes supported in \( \iota_k(f(T_k)) \) and \( \iota_k(T_k) \), respectively.
	Restricting the proof of \cite[Theorem~5.2]{VlamisHomeomorphism} to telescopes\footnote{The proof is actually about telescopes, as the first step is to fragment into homeomorphisms supported in telescopes.}, rather than telescoping surfaces, implies that \( H_1 \) is a strongly distorted in \( H_2 \).
	As \( \iota_k \) induces an isomorphism \( V_k \to H_2 \) mapping \( fV_kf^{-1} \) onto \( H_1 \), we have that \( fV_kf^{-1} \) is strongly distorted in \( V_k \).
	Therefore, \( fV_kf^{-1} \) and hence \( V_k \) is strongly distorted in \( \mcg(S) \), as desired. 

	Now, assume that \( M_k \) is perfectly self-similar. 
	The structure of perfectly self-similar surfaces is detailed in \cite{VlamisHomeomorphisma}, and we refer the reader there for details of the facts we use below.
	The main takeaway is that perfectly self-similar surfaces all behave like the sphere minus the Cantor set.
	Therefore, the reader unfamiliar with perfectly self-similar surfaces may find it helpful in the arguments below to imagine \( M_k \) being the 2-sphere with a Cantor set removed and \( T_k \) being the closed disk with a Cantor set removed from its interior.

	The perfectly self-similar case is similar in spirit to the closed manifold case, and in fact, we will simply describe how \Cref{fig:manifolds} also applies to the case at hand. 
	By assumption, there exists \( j \neq k \) such that \( M_j \) is homeomorphic to \( M_k \). 
	Let \( D \) be a subsurface of \( S \) such that \( T_j\cup T_k \subset D \) and \( D \ssm T_j \cup T_k \) is a pair of pants. 
	Note that \( D \) is  homeomorphic to \( T_k \).
	In particular, capping off the boundary component of \( D \) (respectively, \( T_k \) or \( T_j \)) with a disk results in a telescoping surface, namely one homeomorphic to \( M_k \), and hence we can talk about maximal ends of \( D \) (respectively, \( T_k \) and \( T_j \)).

	Fix two maximal ends of \( D \) not seen by \( T_k \) and label them \( \mu_{\pm\infty} \).
	We can then find pairwise-disjoint  subsurfaces \( \{ C_n \}_{n\in\bz} \) such that:
	\begin{itemize}
		\item each \( C_n \) sees a maximal end of \( D \),
		\item each end of \( D \), other than \( \mu_{\pm\infty} \), is seen by one of the \( C_n \),
		\item \( T_k \) is contained in the interior of \( C_0 \),
		\item there is a maximal end of \( D \) seen by \( C_0 \) but not seen by \( T_k \), and
		\item for each neighborhood \( W \) in the surface of \( \mu_{\pm\infty} \) there exists \( N \in \mathbb N \) such that \( C_{\pm n} \subset W \) for all \( n > N \). 
	\end{itemize}
	Self-similarity together with the first condition guarantee that the \( C_n \) are pairwise homeomorphic, and in particular, each \( C_n \) is homeomorphic to \( D \). 

	Choose a homeomorphism \( \vp \colon D \to C_0 \) such that \( \vp(C_0) = T_k \).
	Let \( B_n = \vp(C_n) \). 
	We can now choose a homeomorphism \( \sigma \) supported in \( C_0 \) such that \( \sigma(B_n) = B_{n+1} \). 
	Let \( \tau = \vp^{-1}\circ \sigma \circ \vp \), so that \( \tau(C_n) = C_{n+1} \) and \( \tau \) is supported in \( D \).
	Setting \( C = C_0 \) and \( B = T_k \), the setup we just described is exactly as shown in \Cref{fig:manifolds}. 
	Now, in \Cref{lem:construction}, the requirement that the diameters tend to zero in the second and third conditions can be replaced with the requirement that, under iteration, the sets leave every compact set.
	This is discussed in \cite{LeRouxStrong} and is directly deduced from the original statement by considering a metric on  \( S \) coming from the restriction of a metric on the Freudenthal compactification of \( S \).
	As a result, we can apply \Cref{prop:construction} to see that the set of homeomorphisms supported in \( T_k \) is a strongly distorted group, and hence so is \( V_k \), being a quotient of a strongly distorted group. 

	We have shown that each of the \( V_k \) is strongly distorted in \( \mcg(S) \), and therefore that \( U = V_1V_2\cdots V_n \) is strongly distorted as well. 
\end{proof}


\section{Homeomorphism groups of well-ordered spaces}
\label{sec:well-ordered}

We now prove that the homeomorphism group of a compact well-ordered space is SB-generated whenever the limit capacity is a successor ordinal.
Before doing so, we briefly introduce well-ordered spaces and some basic properties. 
For more detailed introduction and proofs of the properties mentioned below, we refer the reader to \cite[Section~2]{BhatAlgebraic}. 

Recall that a binary relation \( \leq \) on a set \( X \) is a \emph{well-order} if it is a total order (i.e., reflexive, antisymmetric, transitive, and strongly connected) and every non-empty subset has a least element with respect to the ordering; the pair \( (X, \leq) \) is called a \emph{well-ordered set}.
Two well-ordered sets \( X \) and \( Y \) are \emph{order isomorphic} if there exists a bijection \( f \co X \to Y \) such that \( x_1 \leq x_2 \) if and only if \( f(x_1) \leq f(x_2) \) for all \( x_1, x_2 \in X \).  

The \emph{order topology} on the well-ordered set \( X \) is generated by sets of the form \( \{x : a < x\} \) and \( \{ x: x < b\} \) for all \( a,b \in X \), where \( < \) is the strict ordering induced by \( \leq \). 
A \emph{well-ordered space} is a well-ordered set equipped with its order topology.
Every well-ordered space is Hausdorff and zero-dimensional (i.e., it admits a basis of clopen sets).
Observe that an order isomorphism between two well-ordered sets induces a homeomorphism of the corresponding well-ordered spaces.
However, two homeomorphic well-ordered spaces need not be order-isomorphic as sets; in particular, the classification of well-ordered spaces up to homeomorphism is coarser than the classification up to order isomorphism.

Compact well-ordered spaces are classified up to homeomorphism by two invariants: their Cantor--Bendixson rank, which is ordinal valued, and the Cantor--Bendixson degree, which is a natural number.
The rank is always a successor ordinal, and the predecessor of the rank is called the \emph{limit capacity}.

There is a natural connection between big mapping class groups and homeomorphism groups of well-ordered spaces. 
Given a manifold \( M \) with end space \( E \), the action of \( \Homeo(M) \) on \( M \) induces an action of \( \Homeo(M) \) on \( E \), and this action factors through the mapping class group, yielding a homomorphism \( \mcg(M) \to \Homeo(E) \).
Let \( X \) be a countable compact well-ordered space.
Then \( X \) can be embedded in the 2-sphere; let \( M_X \) be the surface  obtained by removing an embedded copy of \( X \) in the 2-sphere.
The end space of \( M_X \) is homeomorphic to \( X \), yielding a homomorphism \( \mcg(M_X) \to \Homeo(X) \). 
By \cite{RichardsClassification}, this homomorphism is surjective, allowing us to view \( \mcg(M_X) \) as a type of braid group over \( X \). 
This is a fruitful picture to the geometric topologist, and it is how the author views the elements of \( \Homeo(X) \).
We note that this picture does not hold if \( X \) is uncountable, as it can no longer be embedded in \( \mathbb R^2 \), but nonetheless, the intuition holds. 

Given this relation just described, the work of Mann--Rafi \cite{MannLargescalea} on big mapping class groups implies that \( \Homeo(X) \), equipped with the compact-open topology, is CB-generated when \( X \) is a countable compact well-ordered space with successor limit capacity. 
Their arguments are in a setting more general than well-ordered spaces, and so we do not use their proof directly but rather as inspiration.

Before continuing, we recall several facts about well-ordered spaces that we will implicitly use throughout our argument. 
Let us first recall the definition of the Cantor--Bendixson rank and degree of a space.
The rank is defined via transfinite recursion. 
Let \( X \) be a topological space, set \( X^{(0)} = X \), and let \( X' \) denote the set of accumulation points in \( X \). 
For an ordinal \( \alpha \), set  \( X^{(\alpha+1)} \) to be \( (X^{(\alpha)})' \).
For a limit ordinal \( \lambda \), set \( X^{(\lambda)} = \bigcap_{\alpha < \lambda} X^{(\alpha)} \).
The set \( X^{(\alpha)} \) is called the \emph{\( \alpha^{\text{th}} \) Cantor--Bendixson derivative of \( X \)}.
These derivatives eventually stabilize, and the least ordinal \( \alpha \) satisfying \( X^{(\alpha+1)} = X^{(\alpha)} \) is the \emph{Cantor--Bendixson rank of \( X \)}. 
If \( X \) is compact and if its Cantor--Bendixson derivatives are eventually empty, then the Cantor--Bendixson rank is a successor ordinal and the last nonempty derivative is finite; the cardinality of this set is called the \emph{Cantor--Bendixson degree of \( X \)}. 

In a compact well-ordered space \( X \), the Cantor--Bendixson derivatives are eventually empty, allowing us to define the Cantor--Bendixson rank and degree, which we denote \( \mathrm{rank}(X) \) and \( \mathrm{deg}(X) \), respectively. 
In this notation, the classification above says that two compact well-ordered spaces \( X \) and \( Y \) are homeomorphic if and only if \( \rank(X) = \rank(Y) \) and \( \deg(X) = \deg(Y) \). 

Let \( X \) be a compact well-ordered space.
If \( A \) is a compact subset of \( X \), then with respect to the subspace topology, it is a compact well-ordered space with \( \rank(A) \leq \rank(X) \).
If \( Y \) is a compact well-ordered space with \( \rank(Y) < \rank(X) \), then there exists a clopen subspace \( A \) of \( X \) homeomorphic to \( Y \). 
For \( x \in X \), we define \( \rank(x) \) to be the rank of the set \( \{y \in X: y \leq x \} \). 
Note that the rank of a point is always a successor ordinal. 
There exists a neighborhood basis for \( x \) consisting of clopen sets with Cantor--Bendixson rank equal to \( \rank(x) \) and of Cantor--Bendixson degree one; in particular, these neighborhood basis elements are pairwise homeomorphic.

With the basic properties given above, we can turn to proving \Cref{thm:well-ordered}.

\begin{theorem} 
\label{thm:well-ordered}
	If the limit capacity of a compact well-ordered space is a successor ordinal, then its homeomorphism group is SB-generated.
\end{theorem}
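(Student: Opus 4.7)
My plan is to exhibit a strongly bounded generating set for $\Homeo(X)$ by combining Bhat's degree-one theorem, cited in the introduction, with finitely many additional generators. Let $d = \deg(X)$ and let $p_1, \ldots, p_d$ denote the maximal-rank points of $X$. First, I would partition $X$ into pairwise disjoint clopen sets $A_1 \sqcup \cdots \sqcup A_d$ with $p_i \in A_i$, each of rank $\rank(X)$ and degree one: choose disjoint clopen neighborhoods of the $p_i$ drawn from the basis of max-rank degree-one clopen sets, and absorb the leftover lower-rank clopen portion into the neighborhood of $p_1$. By the classification of compact well-ordered spaces, the $A_i$ are pairwise homeomorphic.

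By Bhat's theorem, each $\Homeo(A_i)$ is strongly bounded, and extending homeomorphisms by the identity off $A_i$ embeds $\Homeo(A_i)$ as a strongly bounded subgroup $H_i \leq \Homeo(X)$. Next, I would choose a finite set $F \subset \Homeo(X)$ containing (a) representatives realizing every permutation of the partition $\{A_1, \ldots, A_d\}$, and (b) ``swap'' homeomorphisms $\tau_{ij}$ for $i \neq j$ exchanging a chosen clopen subset of $A_i$ with one of $A_j$ while fixing the rest of $X$. Set $U = F \cup H_1 \cup \cdots \cup H_d$. A finite union of strongly bounded subsets is strongly bounded (an easy extension of \Cref{lem:product of sb is sb}), so $U$ is strongly bounded, and the theorem reduces to showing $\langle U \rangle = \Homeo(X)$.

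For the generation claim, given $f \in \Homeo(X)$ I would first pre-compose by the appropriate permutation from $F$ to reduce to the case that $f$ fixes every $p_i$. The remaining obstacle is that $f$ need not preserve the partition, though each $f(A_i)$ differs from $A_i$ only by clopen sets of rank strictly less than $\rank(X)$. I would then express this residual partition-mismatch as a finite product of conjugates of the $\tau_{ij}$ by elements of $H_i \cup H_j$, exploiting the self-similarity of each $A_i$ to transport the relevant clopen subsets into the standard positions acted on by the swaps. After absorbing the mismatch, the remaining piece lies in the partition-preserving subgroup $H_1 \cdots H_d$.

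The main obstacle is this absorption step. Clopen subsets of each $A_i$ can realize many Cantor--Bendixson isomorphism types---potentially uncountably many, depending on the limit capacity---so a single $\tau_{ij}$ per pair cannot directly exchange arbitrary clopen subsets of $A_i$ and $A_j$. The plan is to organize the mismatch hierarchically, building higher-type swaps as iterated compositions of a bounded family of lower-type swaps and relying on the shift structure of each $H_i$ (inherited from the degree-one case) to supply the required transitivity on positions. Verifying that a finite $F$ indeed suffices in every case is the technical heart of the argument and is where I expect the delicate work to lie.
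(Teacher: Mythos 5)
Your overall architecture agrees with the paper's: split $X$ into $d$ pairwise disjoint degree-one clopen pieces, one per maximal-rank point, use the cited degree-one theorem to make the partition-preserving subgroup $H_1\cdots H_d$ strongly bounded, and then add finitely many elements to generate everything (the paper works in the finite-index subgroup fixing each maximal-rank point rather than adjoining permutations, but that difference is cosmetic). The problem is that the generation step is the entire technical content of the theorem, and your proposal does not actually carry it out --- you say explicitly that verifying a finite $F$ suffices ``is where I expect the delicate work to lie.'' The difficulty you identify is real: writing $\rank(X)=\alpha+2$, the clopen mismatch $f(A_i)\smallsetminus A_i$ can be any compact well-ordered space of rank at most $\alpha+1$ and finite degree, and when $\alpha$ is uncountable there are uncountably many such homeomorphism types; a single swap $\tau_{ij}$ of a \emph{fixed} clopen subset, even after conjugation by $H_i\cup H_j$, only ever exchanges sets of one homeomorphism type and of one ``size,'' so it cannot absorb an arbitrary mismatch. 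The proposed remedy (``organize the mismatch hierarchically'') is precisely the missing argument, and no mechanism or termination proof is supplied.

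For comparison, the paper closes this gap with three ingredients your sketch lacks. First, the successor-limit-capacity hypothesis --- which your outline never invokes, though it is essential --- is used to write each piece minus its maximal point as a disjoint union of countably many pairwise homeomorphic clopen sets $A_{k,n}$ of rank $\alpha+1$ and degree one; these serve as standard slots. Second, the finite set $F$ consists not of swaps but of \emph{shifts} $e_{j,k}$, $o_{j,k}$ acting on the even- and odd-indexed slots: a power $e_{j,k}^{m}$ transfers $m$ standard slots from $\Omega_j$ into $\Omega_k$, so the amount of material moved need not be bounded in advance, while elements of $U$ are used to normalize an arbitrary finite mismatch into the standard slots before shifting. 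Third, the induction is organized by the ordinal $\lambda_{j,k}(g)$, the least $\beta$ at which the sets of rank-$(\beta+1)$ points exchanged between $\Omega_j$ and $\Omega_k$ are finite; this is automatically at most $\alpha$ because $g$ fixes the maximal points, each normalize-and-shift step strictly decreases $\lambda_{j,k}$, and the process terminates because a strictly decreasing sequence of ordinals is finite. Without analogues of the second and third points --- in particular, without an argument for why your hierarchical correction stops after finitely many stages when the rank hierarchy is transfinite --- the proposal is a plausible plan rather than a proof.
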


\begin{proof}
	Let \( X \) be a compact well-ordered space whose limit capacity is a successor ordinal.
	Set \( d = \deg(X) \). 
	If \( d = 1 \), then \( \Homeo(X) \) is strongly bounded \cite[Theorem~3.14]{BhatAlgebraic}, and hence SB-generated; we may therefore assume that \( d > 1 \). 
	Choose pairwise-disjoint clopen subsets \( \Omega_1, \ldots,\Omega_d \) such that \( \rank(\Omega_k) = d \) and \( X = \bigsqcup_{k=1}^d \Omega_k \).
	Note that each of the \( \Omega_k \) contains exactly one of the \( d \) maximal rank elements of \( X \), call it \( \mu_k \).  

	Let \( U_k \) be the subgroup of \( \Homeo(X) \) consisting of homeomorphisms supported in \( \Omega_k \).
	Observe that each element in \( U_k \) commutes with each element of \( U_j \) whenever \( j \neq k \), allowing us to define the subgroup \( U = U_1U_2\cdots U_k \).
	In fact, \( U \) is simply the subgroup of \( \Homeo(X) \) that  stabilizes each of the \( \Omega_k \) setwise. 
	We first claim that \( U \) is strongly bounded in \( \Homeo(X) \). 
	As every homeomorphism of \( \Omega_k \) can be extended to all of \( X \) by the identity, we see that \( U_k \) is homeomorphic to \( \Homeo(\Omega_k) \). 
	Now, \( \Omega_k \) is a well-ordered space with Cantor--Bendixson degree one and whose limit capacity is a successor ordinal; therefore, by \cite[Theorem~3.14]{BhatAlgebraic}, \( \Homeo(\Omega_k) \)---and hence \( U_k \)---is strongly distorted. 
	As \( U \) is the product of the \( U_k \), \Cref{lem:product of distorted is distorted} implies \( U \) is strongly distorted and hence strongly bounded by \Cref{lem:distorted implies bounded}. 
	In particular, \( U \) as a strongly bounded group, is strongly bounded as a subset of \( \Homeo(X) \). 

	Let \( G \) be the subgroup of \( \Homeo(X) \) that stabilizes each of the \( \mu_k \), and note that \( U < G \).
	We claim that that there exists a finite set \( F \subset G \) such that \( U \cup F \) generates \( G \), implying that \( G \) is SB-generated, as \( U \cup F \) is strongly bounded in \( G \).  
	As \( G \) is finite index in \( \Homeo(X) \), it will follow that \( \Homeo(X) \) is SB-generated as well.

	Let \( \alpha \) be the ordinal satisfying \( \rank(X) = \alpha + 2 \); such an ordinal exists as the limit capacity is a successor ordinal, implying by definition that the Cantor--Bendixson rank is the successor of a successor. 
	To get our finite set \( F \), for \( 1 \leq j < k \leq d \), we are going to take a pair of homeomorphisms, each of which shifts elements of rank \( \alpha + 1 \) away from \( \mu_j \) and towards \( \mu_k \). 

	Using the fact that the limit capacity is a successor ordinal, we can write \( \Omega_k \ssm \{\mu_k\} = \bigcup_{n \in \mathbb N} A_{k,n}
	\), where the \( A_{k,n} \) are pairwise-disjoint pairwise-homeomorphic clopen sets of Cantor--Bendixson rank \( \alpha +1 \) and of degree one.
	This can readily be accomplished as follows:
	as there are countably many rank \( \alpha + 1 \) elements in \( \Omega_k \), enumerate them from least to greatest, say \( x_1 < x_2 < \cdots \), and set \( x_0 = \min \Omega_k \).
	We can then define \( A_{k,n} = \{ y \in \Omega_k: x_{n-1} < y \leq x_n \} \). 

	For \( j,k \in \{1, \ldots, d\} \) with \( j < k \), choose a homeomorphism \( e_{j,k} \in G \) that satisfies the following:
	\begin{itemize}
		\item \( e_{j,k}(A_{k,2n}) = A_{k,2n+2} \) for all \( n \in \mathbb N \),
		\item \( e_{j,k}(A_{j,2}) = A_{k,2} \),
		\item \( e_{j,k}(A_{j,2n}) = A_{j,2n-2} \) for all \( n \in \mathbb N\ssm\{1\} \), and
		\item \( e_{j,k}(x) = x \) for all other \( x \in X \).
	\end{itemize}
	Similarly, choose \( o_{j,k} \in G \) satisfying:
	\begin{itemize}
		\item \( o_{j,k}(A_{k,2n-1}) = A_{k,2n+1} \) for all \( n \in \mathbb N \),
		\item \( o_{j,k}(A_{j,1}) = A_{k,1} \),
		\item \( o_{j,k}(A_{j,2n-1}) = A_{j,2n-3} \) for all \( n \in \mathbb N \ssm\{1\} \), and
		\item \( o_{j,k}(x) = x \) for all other \( x \in X \). 
	\end{itemize} 
	The existence of the \( e_{j,k} \) and \( o_{j,k} \) is readily deduced from the classification of compact well-ordered spaces, and the fact that the \( A_{k,n} \) are clopen.  
	Observe that both \( e_{j,k} \) and \( o_{j,k} \) are shifting points from \( \Omega_j \) to \( \Omega_k \), but a disjoint set of points; moreover, they each shift a unique rank \( \alpha+1 \) point from \( \Omega_j \) to \( \Omega_k \).
	Let \[ F = \left\{ e_{j,k}, o_{j,k}, e_{j,k}^{-1}, o_{j,k}^{-1}: j,k \in \{1, \ldots, d\}, j < k \right\}. \] 
	We claim \( U \cup F \) generates \( G \). 

	Given
	\begin{itemize}
		\item \( g \in G \)
		\item \( j,k \in \{ 1, \ldots, d\} \) with \( j < k \), and
		\item an ordinal \( \beta \) with \( \beta \leq \alpha \),
	\end{itemize}
	let \( O_{j,k,\beta}(g) \) denote the set of rank \( \beta + 1 \) points in \( \Omega_j \) that \( g \) maps into \( \Omega_k \). 
	Similarly, let \( I_{j,k,\beta}(g) \) denote the set of the rank \( \beta + 1 \) points in \( \Omega_k \) that \( g \) maps into \( \Omega_j \). 
	By definition, \( g \in U \) if and only if \( I_{j,k,\beta}(g) = O_{j,k,\beta}(g) = \varnothing \) for all  \( j,k \in \{1, \ldots, d\} \) with \( j < k \) and for all \( \beta \). 
	As \( g \) stabilizes each of the \( \mu_k \), the cardinality of \( O_{j,k,\alpha}(g) \) and of \( I_{j,k,\alpha}(g) \) is finite.
	Using that every set of ordinals is well-ordered, this allows to define the following quantity: 	
		\[
			\lambda_{j,k}(g) := \min\{ \beta : |O_{j,k,\beta}(g)|, |I_{j,k,\beta}(g)| < \infty \}.	
		\]

	Fix \( g \in G \).
	The goal of what follows is to construct an element \( h_{j,k} \) for each \( j < k \) in the subgroup generated by \( U \cup F \) such that \( O_{j,k,\beta}(h_{j,k}\circ g) = I_{j,k,\beta}(h_{j,k}\circ g) = \varnothing  \) for all \( \beta \).
	We construct \( h_{j,k} \) in steps, with each step reducing the value of \( \lambda_{j,k} \). 
	To simplify notation, fix  \( j \) and \( k \) in \( \{1, \ldots, d \} \) such that \( j < k \), and set \( \lambda = \lambda_{j,k} \), \( O_\beta = O_{j,k,\beta}(g) \), \( I_\beta = I_{j,k,\beta}(g) \), \( e = e_{j,k} \), and \( o = o_{j,k} \).

	If \( \lambda(g) < \alpha \), set \( g_0 \) to be the identity. 
	Otherwise, \( \lambda(g) = \alpha \), and there exist \( a,b \in G \) such that \( a \) and \( b \) are supported in \( \Omega_k \) and \( \Omega_j \), respectively, and such that
		\begin{itemize}
			\item \( (a\circ g)(O_\alpha) \subset \bigcup_{i=1}^{|O|} A_{k,2i} \), and
			\item \( (b\circ g)(I_\alpha) \subset \bigcup_{i=1}^{|I|} A_{j,{2i-1}} \). 
		\end{itemize}	 
	Let \( g_0 = o^{|I_\alpha|} \circ e^{-|O_\alpha|} \circ a \circ b \in (U\cup F)^{1+|O_\alpha|+|I_\alpha|} \). 
	It follows that \( \lambda_0 := \lambda( g_0 \circ g ) < \alpha \). 

	Now, there exist \( c, d \in G \) such that \( c \) and \( d \) are supported in \( \Omega_k \) and \( \Omega_j \), respectively, and such that 
	\begin{enumerate}[(a)]
		\item \( (c\circ g_0 \circ g )(O_{\lambda_0}) \subset A_{k,2} \), and
		\item \( (d \circ g_0 \circ g )(I_{\lambda_0}) \subset A_{j,1} \).
	\end{enumerate}
	If \( O_{\lambda_0} \) (resp., \( I_{\lambda_0} \)) is empty, we simply choose \( c \) (resp., \( d \)) to be the identity. 
	Let \[ \widehat O_2 = \{ x \in A_{k,2} : x \leq \max [(c\circ g)(O_{\lambda_0})] \} \] and let \[ \widehat I_1 = \{ x \in A_{k,1} : x \leq \max [(o \circ d \circ g)(I_{\lambda_0})] \}. \]
	Our goal is to simultaneously map \( \widehat O_2 \) and \( A_{k,1} \ssm \widehat I_1 \) into \( \Omega_j \) while moving no other elements of \( \Omega_k \) out of  \( \Omega_k \), thereby decreasing the value of \( \lambda \).  

	Choose clopen subsets \( \widehat I_2 \subset A_{k,2} \ssm \widehat O_2 \) and \( \widehat O_1 \subset A_{k,1}\ssm \widehat I_1 \) such that \( \widehat I_2 \) is homeomorphic to \( \widehat I_1 \) and \( \widehat O_1 \) is homeomorphic to \( \widehat O_2 \). 
	For \( i\in \{1,2\} \), let \( \widehat A_{k,i} = A_{k,i} \ssm (\widehat O_i \cup \widehat I_i) \), so that \[ A_{k,i} = \widehat A_{k,i} \sqcup \widehat O_i \sqcup \widehat I_i. \]  
	Now, choose homeomorphisms
	\begin{align*}
		h_1 \co & \widehat A_{k,1} \to \widehat A_{k,1} \sqcup \widehat I_1 \\
		h_2 \co & \widehat A_{k,2} \to \widehat A_{k,2} \sqcup \widehat O_2 \\
		h_3 \co & \widehat I_1 \to \widehat I_2 \\
		h_4 \co & \widehat O_2 \to \widehat O_1  
	\end{align*}
	and define the homeomorphism \( h \co A_{k,1} \sqcup A_{k,2} \to A_{k,1}\sqcup A_{k,2} \) by \[ h = h_1 \sqcup h_2 \sqcup h_3 \sqcup h_4. \]
	Extending \( h \) by the identity to the rest of \( X \), we may view \( h \) as an element of \( U_k \). 
	Setting \( g_1 = o^{-1}\circ h \circ o \circ d \circ c \in (U\cup F)^4 \), we have by construction that \[ \lambda_1 := \lambda(g_1  \circ g_0 \circ g)< \lambda_0. \]

	Repeating this process, we construct \( g_1, g_2, \ldots \) in \( (U\cup F)^4 \) and a decreasing sequence of ordinals \( \lambda_1 > \lambda_2 > \cdots \), with \( \lambda_m = \lambda(g_m\circ \cdots \circ g_1 \circ g_0 \circ g) \). 
	As every decreasing sequence of ordinals is finite, this process stops in finitely many steps, say \( M \) steps.
	All together, \[ h_{j,k} := g_M\circ \cdots g_1 \circ g_0 \in (U\cup F)^{4M+1+|O_{j,k,\alpha}(g)|+|I_{j,k,\alpha}(g)|} \] satisfies 
	\[ O_{j,k,\beta}(h_{j,k}\circ g) = I_{j,k,\beta}(h_{j,k}\circ g) = \varnothing \]
	for all \( \beta \). 
	In other words, \( h_{j,k}\circ g \) maps no element of \( \Omega_j \) into \( \Omega_k \) and vice versa. 

	Order the pairs of integers \( (j,k) \) with \( j < k \) and \( j,k \leq d \) lexicographically. 
	Recursively perform the above process for each pair \( (j,k) \).
	The end result is an element \( u \) of \( \langle U \cup F \rangle \) such that \( u \circ g \) stabilizes each of the \( \Omega_k \), i.e., \( u \circ g \in U \). 
	Therefore, \( g \) is in the subgroup generated by \( U \cup F \); in particular, \( G = \langle U \cup F \rangle \), as desired. 
\end{proof}

We finish by proving \Cref{cor:proper class}, exhibiting sets of pairwise non-isomorphic SB-generated groups of arbitrary cardinality.

\begin{corollary} 
\label{cor:proper class}
	For any cardinal \( \kappa \), there exists a set of pairwise non-isomorphic, non-finitely generated, non-strongly bounded, SB-generated groups of cardinality \( \kappa \). 
\end{corollary}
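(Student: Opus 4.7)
The plan is to apply \Cref{thm:well-ordered} to a suitable \( \kappa \)-sized family of compact well-ordered spaces. Fix a cardinal \( \kappa \) and, using that the class of ordinals is proper, choose a set \( \Lambda \) of \( \kappa \) pairwise-distinct successor ordinals \( \alpha \geq 1 \). For each \( \alpha \in \Lambda \), let \( X_\alpha \) denote the (unique up to homeomorphism) compact well-ordered space with Cantor--Bendixson rank \( \alpha + 1 \) and degree \( 2 \); its limit capacity is then the successor ordinal \( \alpha \), so \Cref{thm:well-ordered} immediately yields that each \( \Homeo(X_\alpha) \) is SB-generated.

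Each \( X_\alpha \) contains a countable set of isolated points accumulating at one of its two maximal rank points, and the full symmetric group on that set embeds into \( \Homeo(X_\alpha) \) by extending permutations via the identity. Hence \( \Homeo(X_\alpha) \) is uncountable, and since a countable SB-generated group must be finitely generated (as noted in \Cref{section:basics}), it follows that \( \Homeo(X_\alpha) \) is not finitely generated. To show it is not strongly bounded, I would follow the setup of the proof of \Cref{thm:well-ordered}: let \( U \) be the strongly bounded subgroup setwise-stabilizing the two rank-maximal clopen pieces \( \Omega_1 \) and \( \Omega_2 \), and let \( F = \{ e_{1,2}^{\pm 1}, o_{1,2}^{\pm 1} \} \), so that \( U \cup F \) is a strongly bounded generating set. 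By \Cref{cor:maximal word}, the associated word metric \( d \) is maximal. Every element of \( U \) preserves the partition \( \{\Omega_1, \Omega_2\} \) setwise, and each generator in \( F \) transfers at most one rank-\( (\alpha+1) \) point across this partition; a sub-additivity argument then shows that any element of \( d \)-length at most \( L \) transports at most \( L \) rank-\( (\alpha+1) \) points from \( \Omega_1 \) to \( \Omega_2 \). Since each \( \Omega_k \) contains countably many rank-\( (\alpha+1) \) points, one can produce homeomorphisms shifting arbitrarily many such points across, so \( d \) has infinite diameter and \( \Homeo(X_\alpha) \) is not strongly bounded.

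Finally, by the algebraic rigidity of homeomorphism groups of ordinals (the result acknowledged in the introduction), an isomorphism \( \Homeo(X_\alpha) \cong \Homeo(X_\beta) \) forces \( X_\alpha \cong X_\beta \) as topological spaces, and hence \( \alpha = \beta \) by the classification of compact well-ordered spaces via rank and degree. The family \( \{ \Homeo(X_\alpha) \}_{\alpha \in \Lambda} \) therefore has the stated properties. The main obstacle is the non-strong-boundedness step: one must carefully verify that the invariant counting rank-\( (\alpha+1) \) transfers across \( \{\Omega_1, \Omega_2\} \) is genuinely sub-additive under multiplication in \( \Homeo(X_\alpha) \), and that one can construct homeomorphisms that drive this invariant arbitrarily high, which is where the bulk of the bookkeeping lies.
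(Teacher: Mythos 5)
Your construction is the same as the paper's: for each of $\kappa$ successor ordinals $\alpha$ you take the degree-two compact well-ordered space of rank $\alpha+1$, get SB-generation from \Cref{thm:well-ordered}, and get pairwise non-isomorphism from the algebraic rigidity of homeomorphism groups of ordinals together with the rank-and-degree classification. Where you diverge is in the remaining two properties. For non-finite-generation you give an explicit cardinality argument (embedding $\Sym(\bn)$ via a clopen copy of $\omega+1$), which the paper leaves implicit; that is fine. For non-strong-boundedness the paper simply cites \cite[Corollary~1.3]{BhatAlgebraic}, which produces a surjection $\Homeo(Y_\gamma)\to\bz$, whereas you reconstruct a flux-counting argument by hand. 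Your sub-additivity claim is correct --- if $N(g)$ counts rank-$(\alpha+1)$ points carried from $\Omega_1$ into $\Omega_2$, then injectivity and preservation of rank give $N(g_1g_2)\le N(g_1)+N(g_2)$, each generator in $U\cup F^{\pm1}$ has $N\le 1$, and $N(e_{1,2}^{\,n})=n$ --- so the maximal metric on $\langle U\cup F\rangle$ has infinite diameter.

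The gap is that $U\cup F$ generates only the finite-index subgroup $G$ stabilizing the two maximal-rank points, not all of $\Homeo(X_\alpha)$: a homeomorphism may swap $\Omega_1$ and $\Omega_2$, and for such an element your count $N$ is infinite, so the invariant does not extend naively. You therefore only conclude that $G$ is not strongly bounded, and you still need to pass from $G$ to $\Homeo(X_\alpha)$. This is fixable --- either invoke that strong boundedness is inherited by finite-index subgroups (so $\Homeo(X_\alpha)$ strongly bounded would force $G$ strongly bounded, a contradiction), or enlarge the generating set by a swap and replace $N$ by a symmetrized count that remains sub-additive --- but as written the final sentence of your second paragraph asserts more than the argument delivers. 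The paper's citation sidesteps this entirely because the surjection onto $\bz$ is constructed on the full homeomorphism group.
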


\begin{proof}
	Given an ordinal \( \alpha \), let \( X_\alpha \) be the set containing all ordinals less than \( \alpha \).
	Note that for any ordinal \( \alpha \), the sets \( X_{\alpha} \) and \( X_{\alpha+1} \) have the same cardinality. 
	
	Let \( \kappa \) be a cardinal, let \( \alpha \) be a successor ordinal such that \( |X_{\alpha}| = \kappa \), and let \( \beta \) be such that \( \alpha = \beta + 1 \).
	Let \( A \) be the subset of \( X_\alpha \) consisting of all successor ordinals in \( X_\alpha \).  
	By definition, \( |A| \leq \kappa \). 
	Now, define \( f \co X_\beta \to X_{\alpha} \) by \( f(\gamma) = \gamma + 1 \). 
	By definition, \( f \) is injective and the image of \( f \) is \( A \), implying \( |A|\geq |X_\beta| = \kappa \). 
	Therefore, \( |A| = \kappa \). 

	For each \( \gamma \in A \), let \( Y_\gamma \) be a well-ordered space whose Cantor--Bendixson rank is \( \gamma +1 \) and whose degree is two (for instance, letting \( \eta = \omega^\gamma\cdot 2 + 1 \), take \( Y_\gamma = X_{\eta} \)). 
	Now, let \( S = \{ \Homeo(Y_\gamma) : \gamma \in A \} \). 
	By \cite[Theorem~29]{GheysensDynamics}, \( \Homeo(Y_\gamma) \) is isomorphic to \( \Homeo(Y_{\gamma'}) \) if and only if \( \gamma = \gamma' \). 
	By \Cref{thm:well-ordered}, the group \( \Homeo(Y_\gamma) \) is SB-generated; moreover, by \cite[Corollary~1.3]{BhatAlgebraic}, \( \Homeo(Y_\gamma) \) surjects onto \( \mathbb Z \) and hence is not strongly bounded. 
	Therefore, \( S \) is a set of cardinality \( \kappa \) consisting of pairwise non-isomorphic non-strongly bounded non-finitely generated SB-generated groups. 
\end{proof}


\bibliographystyle{amsplain}
\bibliography{Zotero}

\providecommand{\bysame}{\leavevmode\hbox to3em{\hrulefill}\thinspace}
\providecommand{\MR}{\relax\ifhmode\unskip\space\fi MR }
\providecommand{\MRhref}[2]{%
  \href{http://www.ams.org/mathscinet-getitem?mr=#1}{#2}
}
\providecommand{\href}[2]{#2}
\begin{thebibliography}{10}

\bibitem{AramayonaBiga}
Javier Aramayona and Nicholas~G. Vlamis, \emph{Big mapping class groups: An overview}, In the Tradition of {{Thurston}}. {{Geometry}} and Topology, Cham: Springer, 2020, pp.~459--496.

\bibitem{BavardHyperbolicity}
Juliette Bavard, \emph{{Hyperbolicity of the ray graph and quasimorphism on a big modular group}}, Geometry \& Topology \textbf{20} (2016), no.~1, 491--535.

\bibitem{BergmanGenerating}
George~M. Bergman, \emph{Generating infinite symmetric groups}, Bulletin of the London Mathematical Society \textbf{38} (2006), no.~03, 429--440.

\bibitem{BhatAlgebraic}
Megha Bhat, Rongdao Chen, Adityo Mamun, Ariana Verbanac, Eric Vergo, and Nicholas~G. Vlamis, \emph{Algebraic and geometric properties of homeomorphism groups of ordinals}, Preprint, arXiv:2412.17103 [math.GR] (2024), 2024.

\bibitem{CalegariDistortion}
Danny Calegari and Michael~H. Freedman, \emph{Distortion in transformation groups}, Geometry \& Topology \textbf{10} (2006), no.~1, 267--293.

\bibitem{DomatBig}
George Domat and Ryan Dickmann, \emph{Big pure mapping class groups are never perfect}, Mathematical Research Letters \textbf{29} (2022), no.~3, 691--726.

\bibitem{FarbPrimer}
Benson Farb and Dan Margalit, \emph{A primer on mapping class groups}, Princeton Math. {{Ser}}., vol.~49, Princeton, NJ: Princeton University Press, 2011.

\bibitem{FieldStable}
Elizabeth Field, Priyam Patel, and Alexander~J. Rasmussen, \emph{Stable commutator length on big mapping class groups}, Bulletin of the London Mathematical Society \textbf{54} (2022), no.~6, 2492--2512.

\bibitem{FuchsInfinite}
L{\'a}szl{\'o} Fuchs, \emph{Infinite abelian groups. {{Vol}}. {{I}}}, Pure Appl. {{Math}}., Academic Press, vol.~36, Academic Press, New York, NY, 1970.

\bibitem{GheysensDynamics}
Maxime Gheysens, \emph{Dynamics and structure of groups of homeomorphisms of scattered spaces}, L'Enseignement Math{\'e}matique \textbf{67} (2021), no.~1-2, 161--186.

\bibitem{KallmanUniqueness}
Robert~R. Kallman, \emph{Uniqueness results for homeomorphism groups},  \textbf{295} (1986), 389--396.

\bibitem{LeRouxStrong}
Fr{\'e}d{\'e}ric Le~Roux and Kathryn Mann, \emph{Strong distortion in transformation groups}, Bulletin of the London Mathematical Society \textbf{50} (2018), no.~1, 46--62.

\bibitem{MannLargescalea}
Kathryn Mann and Kasra Rafi, \emph{Large-scale geometry of big mapping class groups}, Geometry \& Topology \textbf{27} (2023), no.~6, 2237--2296.

\bibitem{MannLargescale}
Kathryn Mann and Christian Rosendal, \emph{Large-scale geometry of homeomorphism groups}, Ergodic Theory and Dynamical Systems \textbf{38} (2018), no.~7, 2748--2779.

\bibitem{RichardsClassification}
Ian Richards, \emph{On the classification of noncompact surfaces}, Transactions of the American Mathematical Society \textbf{106} (1963), no.~2, 259--269.

\bibitem{RosendalCoarse}
Christian Rosendal, \emph{Coarse {{Geometry}} of {{Topological Groups}}}, 1 ed., Cambridge University Press, November 2021.

\bibitem{SchafferGraphs}
Anschel {Schaffer-Cohen}, \emph{Graphs of curves and arcs quasi-isometric to big mapping class groups}, Groups, Geometry, and Dynamics \textbf{18} (2024), no.~2, 705--735.

\bibitem{SerreArbres}
Jean-Pierre Serre, \emph{{Arbres, amalgames, SL{$_2$}. R{\'e}dig{\'e} avec la collaboration de Hyman Bass}}, {Ast{\'e}risque}, vol.~46, Soci{\'e}t{\'e} Math{\'e}matique de France (SMF), Paris, 1977.

\bibitem{VlamisHomeomorphism}
Nicholas~G. Vlamis, \emph{Homeomorphism groups of telescoping 2-manifolds are strongly distorted}, Preprint, arXiv:2403.03887 [math.GT] (2024), to appear in Groups Geom. Dyn.

\bibitem{VlamisHomeomorphisma}
\bysame, \emph{Homeomorphism groups of self-similar 2-manifolds}, In the Tradition of Thurston {{III}}. {{Geometry}} and Dynamics, Cham: Springer, 2024, pp.~105--167.

\end{thebibliography}

\end{document}